\journal{XXX}
\newtheorem{theorem}{Theorem}[section]
\newtheorem{definition}{Definition}[section]
\newtheorem{corollary}{Corollary}[section]
\newtheorem{lemma}{Lemma}[section]
\newtheorem{example}{Example}[section]
\newtheorem{remark}{Remark}
\newcommand{\E}{\mathbb{E}}
\begin{document}

\begin{frontmatter}
\title{Some Characterizations and Properties of COM-Poisson Random Variables}

\address[label1]{School of Mathematics and Statistics, Central China Normal University, Wuhan, 430079, China}
\address[label2]{School of Mathematical Sciences, Peking University, Beijing, 100871, China}
\author[label1]{Bo Li}
\ead{haoyoulibo@163.com}
\author[label2]{Huiming Zhang $^{*,}$}
\ead{zhanghuiming@pku.edu.cn}
\author[label1]{Jiao He}
\ead{hejiao2010@hotmail.com}

\fntext[label]{Corresponding author. Bo Li and Huiming Zhang are co-first authors which contributes equally to this work.}

\begin{abstract} This paper introduces some new characterizations of COM-Poisson random variable. First, it extends Moran-Chatterji characterization, and generalizes Rao-Rubin characterization of Poisson distribution to COM-Poisson distribution. Then, it defines the COM-type discrete r.v. ${X_\nu }$ of the discrete random variable $X$. The probability mass function of ${X_\nu }$ has a link to the R{\'e}nyi entropy and Tsallis entropy of order $\nu $ of $X$. And then we can get the characterization of Stam inequality for COM-type discrete version Fisher information. By using the recurrence formula, the property that COM-Poisson random variables~($\nu \ne 1$) is not closed under addition are obtained. Finally, under the property of ``not closed under addition" of COM-Poisson random variables, a new characterization of Poisson distribution is found.
\end{abstract}
\begin{keyword}
discrete distribution \sep conditional distribution \sep Conway-Maxwell-Poisson distribution \sep recurrence formula \sep discrete version Fisher information \sep closed under addition.

2010 Mathematics Subject Classification: 60E05,60E07
\end{keyword}

\end{frontmatter}
\section{Introduction}
Recently, it is surprising that the Poisson distribution--along with COM-Poisson distribution-- plays a significant role in the development of discrete distribution of fitting count data. The COM-Poisson distribution is a two-parameter extension of the Poisson distribution which contains a wide range of over-dispersion and underdispersion properties. However, the Poisson distribution has some strong ideal assumptions, such as one-parameter and equidispersion. The COM-Poisson distribution was briefly introduced by \cite{conway62} as a model for steady state queuing systems with state-dependent arrival or service rates (in other words, birth-death process with Poisson arrival rate and exponential service rate), see the Appendix.

 \cite{shmueli05} rediscovered this distribution and gave a detailed study of probability and statistics, and the application of COM-Poisson distribution. The term ``Conway-Maxwell-Poisson" was also proposed by \cite{shmueli05}. The probability mass function~(p.m.f.) is given by
\begin{equation} \label{eq:com}
P(X = k) = \frac{{{\lambda ^k}}}{{{{(k!)}^\nu }}} \cdot \frac{1}{{Z(\lambda ,\nu )}},(k = 0,1,2, \dots ),
\end{equation}
where $\lambda ,\nu  > 0$ and $Z(\lambda ,\nu ) = \sum\limits_{i = 0}^\infty  {\frac{{{\lambda ^i}}}{{{{(i!)}^\nu }}}} $. We denote (\ref{eq:com}) as $X \sim {\rm{CMP}}(\lambda ,\nu )$.

Meanwhile, some other theoretical properties of Conway-Maxwell-Poisson distribution are also springing up in recent years. \cite{kokonendji08} showed that COM-Poisson distribution was overdispersed when $\nu  \in {\rm{[0,1)}}$ and underdispersed when $\nu  \in {\rm{(1, + }}\infty {\rm{)}}$. Assumed $\nu$ to be an integer, \cite{nadarajah09} derived explicit expressions for its moments and the cumulative distribution function.  \cite{shmueli05} produced an approximation for normalization constant $Z(\lambda ,\nu )$ for integer values, they conjectured that it was also valid for non-integers. \cite{gillispie15} proved the conjecture to be true. The approximation is
\begin{equation} \label{eq:app}
Z(\lambda ,\nu ) = \frac{{\exp (\nu {\lambda ^{1/\nu }})}}{{{\lambda ^{(\nu  - 1)/2\nu }}{{(2\pi )}^{(\nu  - 1)/2}}\sqrt \nu  }}[1 + O(\frac{1}{{{\lambda ^{1/\nu }}}})],
\end{equation}
for all fixed $\nu  > 0$ as $\lambda  \to \infty $ (equivalently, ${\lambda ^{1/\nu }} \to \infty $).

 \cite{borges14} gave conditions that COM-binomial random variable~(r.v.) converges in distribution to a COM-Poisson r.v.~(see Definition 2.1 or Example 3.2 below). \cite{brown01} considered a very large class of approximation distribution which is the equilibrium distribution of a birth-death process with arrival rate ${\alpha _i}$ and service rate ${\beta _i}$, and got the Stein's identities (the functional operator characterizations for this ``very large class of distribution"). By applying the Stein's identities from \cite{brown01},  \cite{daly16} gave an explicit bound in total variation distance between the COM-binomial distribution and the corresponding COM-Poisson limit. \cite{pogany16} found an integral expression for the COM-Poisson normalizing constant $Z(\lambda ,\nu )$.

The aim of this work is to illustrate some theoretical properties of COM-Poisson distribution, especially the characterizations of COM-Poisson distribution. Except density function and generation function, it is an interesting topic to find other ``iff" condition of certain distribution. Some monographs and sections of monographs on many characterizations of discrete distribution have been published on it, see \cite{kagan73}, \cite{patil75}. Interestingly, there are much researches about the Poisson distribution and its related distribution, see \cite{johnson05}, \cite{haight67} and the reference therein.

\section{Characterization by COM-binomial distribution}

In this section, we will deduce the conditional distribution of the COM-Poisson distribution which is useful in following section. Consider the sum of two independent COM-Poisson r.v.'s with parameters $(\lambda_{1},\nu)$ and $(\lambda_{2},\nu)$: $S=X+Y$, then we have
\begin{align*}\label{eq:first}
P(S = s) &= \sum\limits_{x = 0}^s {P(X = x)} P(Y = s - x)\\
&= \sum\limits_{x = 0}^s \frac{\lambda_{1}^{x}}{(x!)^{\nu}}\frac{1}{Z(\lambda_{1},\nu)} \frac{\lambda_{2}^{s-x}}{((s-x)!)^{\nu}} \frac{1}{Z(\lambda_{2},\nu)}\\
&= \frac{1}{Z(\lambda_{1},\nu)}\frac{1}{Z(\lambda_{2},\nu)}
 \sum\limits_{x = 0}^s
 \frac{ (s!)^{\nu} (\lambda_{1}+\lambda_{2})^{s} \lambda_{1}^{x}\lambda_{2}^{s-x}}
 { (s!)^{\nu} (x!)^{\nu} ((s-x)!)^{\nu} (\lambda_{1}+\lambda_{2})^{s}}.\\
\end{align*}
Rewrite the above expression as a binomial-like summation:
\begin{equation}\label{eq:first}
\frac{(\lambda_{1}+\lambda_{2})^{s}}{(s!)^{\nu}Z(\lambda_{1},\nu) Z(\lambda_{2},\nu)}
 \sum\limits_{x = 0}^s {{s} \choose {x}}^{\nu} \left(\frac{\lambda_{1}}{\lambda_{1}+\lambda_{2}}\right)^{x} \left(\frac{\lambda_{2}}{\lambda_{1}+\lambda_{2}}\right)^{s-x}.
\end{equation}
The conditional distribution $P(X = k\left| {S = s} \right.)$ is

\begin{equation} \label{eq:cnh}
 \frac{{P(X = k)P(Y = s - k)}}{{P(S = s)}} = {{s} \choose {k}}^{\nu} \left(\frac{\lambda_{1}}{\lambda_{1}+\lambda_{2}}\right)^{k} \left(\frac{\lambda_{2}}{\lambda_{1}+\lambda_{2}}\right)^{s-k} \bigg/ \sum\limits_{x = 0}^s {{s} \choose {x}}^{\nu} \left(\frac{\lambda_{1}}{\lambda_{1}+\lambda_{2}}\right)^{x} \left(\frac{\lambda_{2}}{\lambda_{1}+\lambda_{2}}\right)^{s-x}.
\end{equation}

It is easy to define the p.m.f. of COM-binomal distribution with parameters $\nu$, $m$ and $p$, see \cite{shmueli05}, \cite{borges14}:
\begin{definition}
The COM-binomial distribution~(CMB) is a distribution with p.m.f.:
\begin{equation} \label{eq:cb}
P(X = k) = \frac{{{{{{m} \choose {k}}}^\nu}{p^k}{{(1 - p)}^{m - k}}}}{{\sum\limits_{i = 0}^m {{{{{m} \choose {i}}}^\nu}{p^i}{{(1 - p)}^{m - i}}} }}
 = \frac{ {{{{{m} \choose {k}}}^\nu}{p^k}{{(1 - p)}^{m - k}}} }{N(m,p,\nu)}
,\quad k = 0,1, \dots ,m.
\end{equation}
where $\nu  \in {{\rm{R}}^ + }, \  m \in \Bbb{N}=:\{0,1,2, \dots\}, \  p \in (0,1)$, and $N(m,p,\nu)=: \sum\limits_{i = 0}^m {{{{{m} \choose {i}}}^\nu}{p^i}{{(1 - p)}^{m - i}}}$. We denote (\ref{eq:cb}) as $ X \sim {\rm{CMB}}(m, p,\nu )$.
\end{definition}

Then if $X$ and $Y$ are independent random variables, we have $X\left| {{\rm{ }}X + Y} \right. \sim {\rm{CMB}}(m,\frac{{{\lambda _1}}}{{{\lambda _1} + {\lambda _2}}},\nu )$ for $X \sim {\rm{CMP}}(\lambda_1 ,\nu )$ and $Y \sim {\rm{CMP}}(\lambda_1 ,\nu )$.
\cite{moran52} and \cite{chatterji63} proved that if $X,Y$ are independent discrete r.v.'s, and $p$ is a constant, then

$$
  P(X=x \mid X+Y=s ) = {{{s} \choose {x}}}{p^x}{{(1 - p)}^{s-x}}, \: x = 0,1,...,s.
$$
holds iff $X$ and $Y$ each has Poisson distribution with parameters in the ratio $p:p-1$.

Motivating by \cite{patil64}'s general results for conditional distribution characterization Poisson distribution, we have a theorem for characterizing COM-Poisson distribution, see also \cite{kagan73}. The method of the proof is the same as \cite{chatterji63}.

\begin{lemma}
Suppose $P(X \left| {X+Y } \right.)$ is the function $c(x,x + y)$ for independent r.v.'s $X$ with p.m.f. $f(x) > 0$ and $Y$ with p.m.f. $g(y) > 0$~(both discrete or both continuous), such that $\frac{{c(x + y,x + y)c(0,y)}}{{c(x,x + y)c(y,y)}}$ is the form of $h(x + y)/h(x)h(y)$ where $h( \cdot )$ is an arbitrary non-negative function, then
\begin{equation} \label{eq:fg}
f(x) = f(0)h(x){e^{ax}},g(y) = g(0)\frac{{h(y)c(0,y)}}{{c(y,y)}}{e^{ay}},
\end{equation}
where $f(0)$ and $g(0)$ are the corresponding normalizer for $f(x)$ and $g(y)$ respectively, which are the p.m.f..
\end{lemma}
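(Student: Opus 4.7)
\emph{Proof plan.} My strategy is to translate the ratio hypothesis into a multiplicative Cauchy equation for $f$ alone, solve it, and then recover $g$ from an elementary ratio of the given conditional probabilities.

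First, I would expand each of the four quantities $c(x,x+y)$, $c(x+y,x+y)$, $c(0,y)$ and $c(y,y)$ using independence: for every $u$ and $s$ with $0 \le u \le s$,
$$c(u,s) = \frac{f(u)\, g(s-u)}{\sum_{k} f(k)\, g(s-k)}$$
(with the analogous formula for densities in the continuous case). In the ratio $\frac{c(x+y,x+y)\, c(0,y)}{c(x,x+y)\, c(y,y)}$ the two normalizers $P(X+Y=x+y)$ and $P(X+Y=y)$ cancel pairwise, and the $g(0)$ and $g(y)$ factors also cancel, leaving
$$\frac{f(x+y)\, f(0)}{f(x)\, f(y)} \;=\; \frac{h(x+y)}{h(x)\, h(y)}.$$

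Next, I would set $\psi(t) := f(t)/\bigl(f(0)\, h(t)\bigr)$; the previous identity rearranges to the classical exponential Cauchy equation $\psi(x+y) = \psi(x)\, \psi(y)$, with $\psi > 0$ and $\psi(0)=1$ (which also forces $h(0)=1$). In the discrete case this is solved by a one-line induction giving $\psi(n) = \psi(1)^{n} = e^{an}$ with $a := \log\psi(1)$; in the continuous case the same conclusion holds because $\psi$ is measurable as a ratio of densities, so only exponential solutions occur. In either setting $f(x) = f(0)\, h(x)\, e^{ax}$. To recover $g$, I would use the simpler vertical ratio
$$\frac{c(0,y)}{c(y,y)} \;=\; \frac{f(0)\, g(y)}{f(y)\, g(0)},$$
in which the common denominator $P(X+Y=y)$ cancels. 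Solving for $g(y)$ and substituting the formula for $f(y)$ from the previous step yields exactly $g(y) = g(0)\, \frac{h(y)\, c(0,y)}{c(y,y)}\, e^{ay}$.

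The only delicate point is the Cauchy-equation step: in the discrete setting it is a routine induction, but the continuous version needs a measurability/regularity remark. Since $\psi$ is built from strictly positive p.m.f.'s (or densities), this regularity is automatic, and I would mention it briefly rather than develop it in detail. Everything else is straightforward algebra on the explicit form of the conditional probabilities, so the heart of the argument is the identification of the ratio hypothesis with the multiplicative Cauchy equation.
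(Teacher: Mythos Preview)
Your argument is correct and follows the classical route due to Patil--Seshadri and Chatterji: expand $c(u,s)=f(u)g(s-u)/P(X+Y=s)$, observe the cancellations that reduce the given ratio to $f(x+y)f(0)/\bigl(f(x)f(y)\bigr)$, solve the resulting multiplicative Cauchy equation for $\psi(t)=f(t)/\bigl(f(0)h(t)\bigr)$, and then recover $g$ from the simpler ratio $c(0,y)/c(y,y)$.

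Note, however, that the paper does \emph{not} supply its own proof of this lemma; it is quoted as a known result from \cite{patil64} (with the Chatterji-style method acknowledged) and then applied in the proof of Theorem~2.1. So there is no ``paper's proof'' to compare against here, but your proposal is exactly the standard derivation those references contain. One small remark: the continuous-case Cauchy step needs only the measurability of $\psi$, which you correctly flag; in the discrete case (the only one actually used downstream in the paper) the induction you indicate is enough.
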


\begin{theorem}
Let $X$ and $Y$ be independent discrete r.v.'s with p.m.f. $f(x) > 0$ and $g(y) > 0$, respectively. If $P(X=x \left| {X+Y=x+y } \right.)$ is the COM-binomial distribution with parameter $(x+y ,\frac{1}{{\theta  + 1}}, \nu)$ in (\ref{eq:cb}), then $X \sim {\rm{CMP}}(\lambda , \nu )$,~$\theta  \in (0,\infty )$, and $Y \sim {\rm{CMP}}~(\theta \lambda , \nu )$,~($\lambda > 0$).
\end{theorem}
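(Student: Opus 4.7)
The plan is to apply Lemma 2.1 directly with $c(x,x+y)$ equal to the given COM-binomial mass function. So I would first write, for $p=1/(\theta+1)$ (hence $1-p=\theta/(\theta+1)$) and $s=x+y$,
$$
c(x,s)=\frac{\binom{s}{x}^{\nu}p^{x}(1-p)^{s-x}}{N(s,p,\nu)},
$$
and then compute the four values $c(s,s)$, $c(0,y)$, $c(x,s)$, $c(y,y)$ appearing in the lemma's hypothesis. In $c(s,s)$, $c(0,y)$, $c(y,y)$ the binomial coefficients are all $1$, so only $c(x,s)$ retains a nontrivial $\binom{s}{x}^{\nu}$. The normalizers match pairwise ($c(s,s)$ with $c(x,s)$, and $c(0,y)$ with $c(y,y)$), and the powers of $p$ and $1-p$ cancel cleanly.

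After that bookkeeping the key identity drops out:
$$
\frac{c(x+y,x+y)\,c(0,y)}{c(x,x+y)\,c(y,y)}=\frac{(x!)^{\nu}(y!)^{\nu}}{((x+y)!)^{\nu}},
$$
which is exactly of the required form $h(x+y)/\bigl(h(x)h(y)\bigr)$ with the natural choice $h(x)=1/(x!)^{\nu}$. This step is the only real computation in the proof; it is what forces the factorial-to-the-$\nu$ shape characteristic of CMP.

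Now I invoke Lemma 2.1. It gives
$$
f(x)=f(0)\,h(x)\,e^{ax}=\frac{f(0)\,e^{ax}}{(x!)^{\nu}},
\qquad
g(y)=g(0)\,\frac{h(y)\,c(0,y)}{c(y,y)}\,e^{ay},
$$
for some constant $a$. Since
$$
\frac{c(0,y)}{c(y,y)}=\left(\frac{1-p}{p}\right)^{y}=\theta^{y},
$$
we get $g(y)=g(0)\,(\theta e^{a})^{y}/(y!)^{\nu}$. Setting $\lambda:=e^{a}>0$, both $f$ and $g$ are seen to be the CMP kernels at parameters $\lambda$ and $\theta\lambda$, respectively; the normalizers $f(0)=1/Z(\lambda,\nu)$ and $g(0)=1/Z(\theta\lambda,\nu)$ are forced by summing to one (convergence holds because $\lambda,\theta\lambda>0$ and $\nu>0$). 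Therefore $X\sim\mathrm{CMP}(\lambda,\nu)$ and $Y\sim\mathrm{CMP}(\theta\lambda,\nu)$, as claimed.

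The only delicate point I anticipate is the algebraic simplification in the first step: one must be careful that the COM-binomial normalizers $N(s,p,\nu)$ and $N(y,p,\nu)$ really do cancel (they do, because they appear with matching multiplicities in numerator and denominator) and that the exponents of $p,1-p$ collapse to $1$. Once that clean ratio $(x!)^{\nu}(y!)^{\nu}/((x+y)!)^{\nu}$ is obtained, the rest is just reading off the conclusion of Lemma 2.1 and identifying $\lambda=e^{a}$.
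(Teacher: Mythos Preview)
Your proposal is correct and follows essentially the same route as the paper: both apply Lemma~2.1 to the COM-binomial conditional mass function, reduce the key ratio to $(x!)^{\nu}(y!)^{\nu}/((x+y)!)^{\nu}$ so that $h(x)=1/(x!)^{\nu}$, compute $c(0,y)/c(y,y)=\theta^{y}$, and then read off $\lambda=e^{a}$. The only cosmetic difference is that the paper first rewrites $c(a,b)$ via the identity $(1/(1+\theta))^{a}(\theta/(1+\theta))^{b-a}=(1/\theta)^{a}(\theta/(1+\theta))^{b}$ before simplifying, whereas you track the $p,1-p$ powers and the normalizers $N(\cdot,p,\nu)$ directly; both arrive at the same cancellation.
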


\begin{proof}
We already know that $c(x,x + y) = {{x+y} \choose {x}}^{\nu} {\left( {\frac{1}{{\theta  + 1}}} \right)^x}{\left( {\frac{\theta }{{\theta  + 1}}} \right)^{(x + y)-x}}/{N(x+y,\frac{1}{1+\theta},\nu)}$,
so we have
\begin{equation}
  c( a,b) = {{b} \choose {a}}^{\nu} {\left( {\frac{1}{\theta }} \right)^a}{\left( {\frac{\theta }{{1 + \theta }}} \right)^b}/N(b,\frac{1}{{1 + \theta }},\nu ).
\end{equation}
by noticing that ${\left( {\frac{1}{{1 + \theta }}} \right)^a}{\left( {\frac{\theta }{{1 + \theta }}} \right)^{b - a}} = {\left( {\frac{1}{\theta }} \right)^a}{\left( {\frac{\theta }{{1 + \theta }}} \right)^b}$.
Hence,
 \begin{align*}
\frac{{c(x + y,x + y) \cdot c(0,y)}}{{c(x,x + y)\cdot c(y,y)}}
&= \frac{\left(\frac{1}{\theta}\right)^{x+y}\left(\frac{\theta}{1+\theta}\right)^{x+y} \cdot \left(\frac{\theta}{1+\theta}\right)^{y} }
{{{x+y} \choose {x}}^{\nu} \left(\frac{1}{\theta}\right)^{x}\left(\frac{\theta}{1+\theta}\right)^{x+y}\cdot \left(\frac{1}{\theta}\right)^{y} \left(\frac{\theta}{1+\theta}\right)^{y} }\\
 &= \left( \frac{1}{(x+y)!}/\frac{1}{x!} \frac{1}{y!}\right)^{\nu}.
 \end{align*}
Then we have $h(x) = \left( \frac{1}{x!} \right)^{\nu}$. In view of (\ref{eq:fg}), $\frac{c(0,y)}{c(y,y)}= \theta ^{y} $, let $\lambda= {e^a}$ and compared with the expression (\ref{eq:com}), hence
  $$f(x) = f(0)\frac{{{\lambda ^x}}}{{{{(x!)}^\nu }}} , g(y) = g(0) \frac{ (\theta \lambda) ^ {y}}{(y!)^{\nu}},$$
are p.m.f. of COM-Poisson distributions.
\end{proof}

The following Rao-Rubin characterization for Poisson law is based on a interesting model
where an original observation from a discrete distribution is subject to
damage according to a binomial distribution. Let $X,Y$ be two discrete r.v.'s, \cite{rao64} proved that if
$$
  P(Y=r \mid  X= n) = {{{{{n} \choose {r}}}}{p^r}{{(1 - p)}^{n-r}}},\: p \in (0,1),
$$
then
$$
  P(Y=r) = P(Y=r \mid  X= Y) , r=0,1,\dots.
$$
iff $X$ is Poisson distributed, the simple proof can be found in \cite{wang75}.

Based on the above-mentioned COM-binomial distribution and an extension of \cite{rao64}'s characterization which was constructed by \cite{shanbhag77}, we obtain the Rao-Rubin characterization for COM-Poisson distribution.

\begin{lemma}\label{lem:shan}
Let $X,Y$ be the non-negative r.v.'s such that $P(X = z) = {P_z}$ with ${P_0} < 1,{P_z} > 0, z \in \Bbb{N}$, and
 \begin{align*}
P(\left. {Y = r} \right|X = z) = \frac{{{a_r}{b_{z - r}}}}{{\sum\nolimits_{s = 0}^z  {{a_s}{b_{z - s}}} }} = :\frac{{{a_r}{b_{z - r}}}}{{{c_z}}},(r = 0,1, \dots ,z),
 \end{align*}
 where ${a_z} > 0$ for all $z \ge 0$, ${b_0},{b_1} > 0$ and ${b_z} \ge 0$ for $z \ge 2$, then
$$
  P(Y = r) = P(\left. {Y = r} \right|X = Y),(r = 0,1, \dots ) .
$$
iff
\begin{equation}\label{eq:second}
  \frac{{{P_z}}}{{{c_z}}} = \frac{{{P_0}}}{{{c_0}}}{\theta ^z},(z = 0,1, \dots )  \quad {\rm{for \ some}}\quad \theta  > 0 .
\end{equation}
\end{lemma}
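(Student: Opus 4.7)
The plan is to prove the two implications separately. Write $u_z := P_z/c_z$, $A(\theta) := \sum_{s\geq 0} a_s \theta^s$, $B(\theta) := \sum_{k\geq 0} b_k \theta^k$, and note the Cauchy product identity $\sum_z c_z \theta^z = A(\theta) B(\theta)$ following from $c_z = \sum_{s\leq z} a_s b_{z-s}$. For the sufficiency direction $(\Leftarrow)$ I would substitute the ansatz $u_z = (P_0/c_0)\theta^z$ directly into both quantities:
\begin{equation*}
P(Y=r) = \sum_{z\geq r} P_z \cdot \frac{a_r b_{z-r}}{c_z} = a_r \sum_{k\geq 0} u_{r+k} b_k = \frac{P_0 a_r}{c_0}\theta^r B(\theta),
\end{equation*}
\begin{equation*}
P(Y=r\mid X=Y) = \frac{P_r \cdot a_r b_0/c_r}{\sum_s P_s \cdot a_s b_0/c_s} = \frac{u_r a_r}{\sum_s u_s a_s} = \frac{a_r\theta^r}{A(\theta)}.
\end{equation*}
The normalization $1 = \sum_z P_z = (P_0/c_0) A(\theta) B(\theta)$ then forces the two displayed expressions to coincide.

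For necessity $(\Rightarrow)$, equating these two formulas (with $u$ unknown) and cancelling $a_r>0$ produces the functional equation
\begin{equation*}
\sum_{k\geq 0} b_k u_{r+k} = \frac{u_r}{K}, \qquad r\geq 0, \qquad K := \sum_s a_s u_s > 0.
\end{equation*}
My plan is then: (i) locate $\theta > 0$ with $B(\theta) = 1/K$ -- the equation at $r=0$ gives $1/K > b_0 = B(0)$ strictly (using $b_1 u_1 > 0$), and $B$ is continuous and strictly increasing on its domain of convergence, yielding such a $\theta$; (ii) rescale $f(r) := u_r/\theta^r$, turning the equation into $f(r) = \sum_{k\geq 0} \pi(k) f(r+k)$ with $\pi(k) := K b_k \theta^k$ a probability measure on $\mathbb{N}$ satisfying $\pi(0), \pi(1) > 0$; (iii) conclude $f$ is constant by a Choquet--Deny / maximum-principle argument, which immediately gives $u_r = u_0 \theta^r$.

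Step (iii) is the principal obstacle and is the reason both $b_0 > 0$ and $b_1 > 0$ are assumed. The functional equation itself yields $u_{r+1}/u_r \leq 1/(K b_1)$, so $f$ is bounded. If $M := \sup f$ is attained at some $r^*$, the identity $M = \sum_k \pi(k) f(r^*+k) \leq M$ combined with $\pi(1) > 0$ forces $f(r^*+1) = M$, and iterating yields $f \equiv M$ on $\{r^*, r^*+1, \dots\}$; reading the equation at $r^*-1$ then gives $f(r^*-1) = \pi(0) f(r^*-1) + (1-\pi(0))M$, and $\pi(0) < 1$ forces $f(r^*-1) = M$, so by induction $f \equiv M$ on all of $\mathbb{N}$. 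When $M$ is not attained, the same conclusion follows by iterating the equation to get $f(r) = \mathrm{E}[f(r + S_n)]$ for a random walk $S_n$ with step-law $\pi$ (aperiodic because $\pi(0), \pi(1) > 0$) and applying dominated convergence as $n \to \infty$.
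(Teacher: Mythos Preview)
The paper does not give its own proof of this lemma; it is quoted from \cite{shanbhag77} and used as a black box in the proof of Theorem~2.2. So there is no in-paper argument to compare against, and your write-up is really to be measured against Shanbhag's original, which likewise reduces to a renewal-type functional equation for $u_z:=P_z/c_z$ and then invokes a Choquet--Deny/Lau--Rao result on $\mathbb{N}$. In that sense your strategy is exactly the standard one, and your sufficiency computation is correct.

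In the necessity direction, however, step~(iii) does not close as written. You claim that $f(r)=u_r/\theta^r$ is bounded because $u_{r+1}/u_r\le 1/(Kb_1)$. But the equation $B(\theta)=1/K$ together with $b_0>0$ forces $b_1\theta<1/K$, i.e.\ $\theta<1/(Kb_1)$ \emph{strictly}; your ratio bound then gives only $f(r+1)/f(r)\le 1/(Kb_1\theta)>1$, which says nothing about boundedness above. Since the maximum-principle argument that follows (and the ``$M$ not attained'' random-walk argument, which in any case needs more than bare dominated convergence---one needs martingale convergence plus a coupling along the event $\{X_{n+1}=1\}$) both rely on $f$ being bounded, this is a genuine gap. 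Step~(i) has a related soft spot: $B(0)<1/K$ and monotonicity alone do not put $1/K$ in the range of $B$; one must also argue, e.g.\ from $1/K=\sum_k b_k(u_k/u_0)\le\sum_k b_k(Kb_1)^{-k}=B(1/(Kb_1))$, that the range is large enough.

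Both issues disappear if you invoke the Lau--Rao/Deny theorem on $\mathbb{N}$ in the form that does \emph{not} presuppose boundedness: any strictly positive sequence satisfying $u_r=\sum_{k\ge0}(Kb_k)\,u_{r+k}$ with $Kb_0<1$ and $Kb_1>0$ is necessarily geometric, and the base $\theta$ then satisfies $B(\theta)=1/K$ a posteriori. That is essentially what Shanbhag proves and uses, and it makes your step~(i) unnecessary rather than merely fixable.
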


Next, we will give the following COM-Poisson extension of Rao-Rubin characterization. The result of \cite{shanbhag77} will be useful to get a proof immediately.

\begin{theorem}
Let $X,Y$ be the discrete r.v.'s such that $P(X = z) = {P_z}$ with ${P_0} < 1,{P_z} > 0, z \in \Bbb{N}$, and
 \begin{align*}
P(\left. {Y = r} \right|X = z) = {{z} \choose {k}}^{\nu} \left(\frac{m}{m+n}\right)^{k} \left(\frac{n}{m+n}\right)^{z-k}/
\sum\limits_{x = 0}^z {{z} \choose {x}}^{\nu} \left(\frac{m}{m+n}\right)^{x} \left(\frac{n}{m+n}\right)^{z-x},(m,n \in \Bbb{N})
 \end{align*}
then $P(Y = r) = P(\left. {Y = r} \right|X =Y),(r = 0,1, \dots ) $ iff $X \sim {\rm{CMP}} (\theta ,\nu)$ for some $\theta > 0$ .
\end{theorem}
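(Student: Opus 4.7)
The plan is to derive this as a direct consequence of Lemma 2.2. The setup-specific work is to recast the given COM-binomial conditional p.m.f. in Shanbhag's factored form $P(Y=r \mid X=z) = a_r b_{z-r}/c_z$ with $c_z = \sum_{s=0}^{z} a_s b_{z-s}$. Writing $p = m/(m+n)$ and $q = n/(m+n)$, a natural choice is
\[
a_r = \frac{p^r}{(r!)^\nu}, \qquad b_s = \frac{q^s}{(s!)^\nu},
\]
and multiplying numerator and denominator of $a_r b_{z-r}/\sum_s a_s b_{z-s}$ through by $(z!)^\nu$ recovers the stated COM-binomial expression $\binom{z}{r}^\nu p^r q^{z-r}/N(z,p,\nu)$ exactly. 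The positivity hypotheses of Lemma 2.2 ($a_z>0$ for all $z$; $b_0,b_1>0$ and $b_z\ge 0$ for $z\ge 2$) hold since $m,n$ are positive integers, and this identification gives $c_z = N(z,p,\nu)/(z!)^\nu$ with $c_0=1$.

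Next, I would invoke Lemma 2.2, which converts the Rao-Rubin identity $P(Y=r)=P(Y=r\mid X=Y)$ (for every $r$) into the multiplicative condition
\[
P_z = P_0\, \theta^z\, c_z = \frac{P_0\, \theta^z\, N(z,p,\nu)}{(z!)^\nu} \qquad \text{for some } \theta > 0 .
\]
The task is then to recognize the right-hand side as a COM-Poisson p.m.f. with an appropriately redefined parameter. For the converse, I would start from $P_z = \theta^z/((z!)^\nu Z(\theta,\nu))$, form the ratio $P_z/c_z$, and verify that Shanbhag's multiplicative criterion holds; Lemma 2.2 then closes the loop.

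The main obstacle lies in this final identification step: writing $\theta^z N(z,p,\nu)/(z!)^\nu$ in the CMP form $\lambda^z/(z!)^\nu$ for some $\lambda$ depending on $\theta,m,n$. For $\nu=1$ this is immediate because $N(z,p,1)=1$, and the assertion collapses to the classical Rao-Rubin characterization of Poisson$(\theta)$. For $\nu\neq 1$, however, the CMB normalizer $N(z,p,\nu)$ carries genuine $z$-dependence, so completing the collapse requires either an algebraic identity specific to the COM-binomial normalizing constant or a careful reinterpretation of the parameter $\theta$ in terms of $m,n$. This is the step I expect to scrutinize most carefully, and I would only address it after the routine Shanbhag reduction is securely in place.
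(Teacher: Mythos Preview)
Your approach matches the paper's: both reduce to Shanbhag's Lemma~2.2 after factoring the COM-binomial conditional p.m.f. Your factorization $a_r = p^r/(r!)^\nu$, $b_s = q^s/(s!)^\nu$ is the natural one and correctly yields $c_z = N(z,p,\nu)/(z!)^\nu$ with $c_0=1$.

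The obstacle you flag in your last paragraph is not a detail to be tidied up later; it is a genuine gap, and in fact it cannot be closed. The paper's proof sidesteps it by splitting the normalizer as $\sqrt{N(z,p,\nu)}\cdot\sqrt{N(z,p,\nu)}$, absorbing one factor into $a_k$ and one into $b_{z-k}$, and then declaring $c_z=(z!)^{-\nu}$. But this makes $a_k$ depend on $z$, so Lemma~2.2 no longer applies; the paper's identification of $c_z$ is therefore invalid. With the correct $c_z$ that you computed, Shanbhag's criterion gives
\[
P_z \;=\; P_0\,\theta^z\,\frac{N(z,p,\nu)}{(z!)^\nu},
\]
which is exactly the p.m.f.\ of the convolution of two independent COM-Poisson variables (compare the paper's equation for $P(X+Y=s)$). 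By the paper's own Corollary~3.1 this is \emph{not} a COM-Poisson distribution when $\nu\neq 1$: the factor $N(z,p,\nu)$ is not of the form $C\rho^z$ (for instance, when $\nu=2$ and $p=\tfrac12$ one has $N(z,\tfrac12,2)=\binom{2z}{z}/4^z\sim(\pi z)^{-1/2}$). So the final ``collapse'' you were hoping to perform is impossible, and the theorem as stated is false for $\nu\neq 1$; only the classical Rao--Rubin case $\nu=1$ survives. Your proposal is correct up to the point where you stop, and your suspicion about the last step is well founded.
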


\begin{proof}

From the normalizer in (\ref{eq:cb}), we have
\begin{align*}
P(\left. {Y = k} \right|X = z) &= \frac{{{z} \choose {k}}^{\nu} \left(\frac{m}{m+n}\right)^{k} \left(\frac{n}{m+n}\right)^{z-k}}{N(m, \frac{m}{m+n}, \nu)}\\
&= \frac{{\frac{1}{{{{(k!)}^\nu }}}{{(\frac{m}{{m + n}})}^k}}}{{\sqrt {N(z,\frac{m}{{m + n}},\nu )} }} \cdot \frac{{\frac{1}{{{{((z - k)!)}^\nu }}}{{(\frac{n}{{m + n}})}^{z - k}}}}{{\sqrt {N(z,\frac{m}{{m + n}},\nu )} }}/{(z!)^{ - \nu }}
 = :\frac{{{a_k}{b_{z - k}}}}{{{c_z}}}
\end{align*}
where ${a_k},{b_{z-k}},{c_z}$ satisfy the conditions in \textbf{Lemma \ref{lem:shan}}. Compared with (\ref{eq:second}), we have ${c_z} = {(z!)^{-\nu }}$. This implies that ${P_z} = \frac{{{P_0}{\theta ^z}}}{{{{(z!)}^\nu }}}$. And summing $z$ over $z = 0,1, \ldots $, we have
\[{P_0} = {\left( {\sum\limits_{z = 0}^\infty  {\frac{{{\theta ^z}}}{{{{(z!)}^\nu }}}} } \right)^{ - 1}}\]
The last expression has $X \sim {\rm{CMP}} (\theta, \nu)$ for some $\theta > 0$.
\end{proof}

\section{COM-type distribution and discrete version Fisher information}

\subsection{COM-type distribution}
In this section, COM-Poisson can be characterized by the equality sign in a discrete version of the Stam inequality~(\cite{stam59}) for the COM-type Fisher information based on Kagan's characterization~(\cite{kagan01}) of Poisson distribution.

For any discrete distribution (or r.v.), we first define its COM-type distribution (or r.v.).
\begin{definition}\label{def:cmt}
Let $P_{X}(x)=P(X=x)$ be the p.m.f. of a discrete r.v. $X$, a discrete r.v.$X_{\nu}$ is said to have a COM-type of order $\nu$ of the distribution  (if exists, namely $\sum_{x=1}^{\infty} P_{X}^{\nu}(x) < \infty$), if the p.m.f. of $X_{\nu}$ is given by
\begin{equation} \label{eq:four}
P_{X_{\nu}}(x)= \frac{P_{X}^{\nu}(x)}{\sum_{x=0}^{\infty} P_{X}^{\nu}(x)} =:{C_{{X_\nu }}} P_{X}^{\nu}(x),  (\nu >0)
\end{equation}
with ${C_{{X_\nu }}} = 1/\sum\nolimits_{x = 0}^\infty  {P_X^\nu } (x)$.
\end{definition}
By the structure of COM-type distribution, we call it ``$\nu$-power distribution" with generating p.m.f..

Next, we illustrate some examples of COM-type distributions.

\begin{example}
For the COM-type Poisson distribution connected to (\ref{eq:com}), the p.m.f. is given by
$$
  P_{X_{\nu}}(x)=\frac{\lambda^{x}}{(x!)^{\nu}}Z^{-1}(\lambda,\nu)= \left(\frac{\mu^{x}}{x!}\right)^{\nu}Z^{-1}(\mu^{\nu},\nu),\:(let \  \lambda =\mu^{\nu}).
$$
where $\lambda ,\nu  > 0$.
\end{example}

\begin{example}
The p.m.f. of COM-type binomial distribution linked to (\ref{eq:cb}) is given by
$$
  P_{X_{\nu}}(x)= \frac{{{n} \choose {x}}^{\nu} p^{x}(1-p)^{n-x}}{\sum\limits_{k = 0}^n {{n} \choose {k}}^{\nu} p^{k}(1-p)^{n-k}}
  =: \frac{\bigg({{n} \choose {x}} q^{x}(1-q)^{n-x}\bigg)^{\nu}}{\sum\limits_{k = 0}^n \bigg({{n} \choose {k}} q^{k}(1-q)^{n-k}\bigg)^{\nu}}, \: (let \ \left(\frac{p}{1-p}\right)^{\frac{1}{\nu}}=\frac{q}{1-q}).
$$
where $\nu  \in {{\rm{R}}^ + }, \  n \in \Bbb{N}, \  p \in (0,1)$.
\end{example}

\begin{example}
A r.v. $X$ is said to follow COM-negative binomial distribution $({\rm{CMNB}}(r,\nu ,p))$ with three parameters $(r,\nu ,p)$ (see \cite{zhang18}), if the p.m.f. is given by
\begin{equation} \label{eq:cnbd}
{\rm{P}}(X = k) = \frac{{{{\big(\frac{{\Gamma (r + k)}}{{k!{\mkern 1mu} \Gamma (r)}}\big)}^\nu }{p^k}{{(1 - p)}^r}}}{{\sum\limits_{i = 0}^\infty  {{{\big(\frac{{\Gamma (r + i)}}{{i!{\mkern 1mu} \Gamma (r)}}\big)}^\nu }} {p^i}{{(1 - p)}^r}}} = :{{\bigg(\frac{{\Gamma (r + k)}}{{k!{\mkern 1mu} \Gamma (r)}}\bigg)}^\nu }\frac{{{p^k}{{(1 - p)}^r}}}{{C(r,\nu ,p)}},\quad (k = 0,1,2, \ldots ),
\end{equation}
where $r,v \in (0,\infty )$ and $p \in (0,1)$. We denote (\ref{eq:cnbd}) as $X \sim {\rm{CMNB}}(r,\nu ,p)$. Then we have
\[{P_{{X_\nu }}}(x) = {\bigg(\frac{{\Gamma (r + x)}}{{x!\Gamma (r)}}{p^{x/\nu }}{(1 - p)^{r/\nu }}\bigg)^\nu }\bigg/\sum\limits_{i = 0}^\infty  {{{\bigg(\frac{{\Gamma (r + i)}}{{i!\Gamma (r)}}{p^{i/\nu }}{{(1 - p)}^{r/\nu }}\bigg)}^\nu }}.\]
\end{example}

\begin{example}
As we know, the geometric distribution is ${{P_X}(x) = p{{(1 - p)}^x}}$, and the COM-type is
\begin{equation}
  {P_{{X_\nu }}}(x) = \frac{{{p^\nu }{{(1 - p)}^{\nu x}}}}{{\sum\limits_{i = 0}^\infty  {{p^\nu }{{(1 - p)}^{\nu i}}} }} = \left[1 - {(1 - p)}^\nu \right](1 - p)^{\nu x},p \in (0,1),
\end{equation}
so it is easy to see that the COM-type of COM-geometric distribution maintains a geometric distribution.
\end{example}

\begin{example}
The Riemann zeta distribution~(see \cite{lin08}) with p.m.f.
\[P(X = x) = \frac{{{x^{ - \sigma }}}}{{\sum\nolimits_{i = 1}^\infty  {{i^{ - \sigma }}} }} =: \frac{{{x^{ - \sigma }}}}{{\xi (\sigma )}},(x = 1,2, \dots ;\sigma  > 1),\]
then the COM-type Riemann zeta distribution:
\[{P_{{X_\nu }}}(x) = \frac{{{x^{ - \nu \sigma }}}}{{\sum\nolimits_{i = 1}^\infty  {{i^{ - \nu \sigma }}} }} =: \frac{{{x^{ - \nu \sigma }}}}{{\xi (\nu \sigma )}},(x = 1,2, \dots ;\nu \sigma  > 1)\]
is also a Riemann zeta distribution if $\nu  \ge 1$.
\end{example}

\begin{example}
Consider a member of the Lerch-type distributions with probability generating function~(p.g.f.) :
\begin{equation} \label{eq:Lerch}
G(z) = \frac{{\Phi (\rho z,1,c)}}{{\Phi (\rho ,1,c)}} \buildrel \Delta \over = {{\sum\limits_{i = 0}^\infty  {\frac{{{\rho ^i}}}{{(c + i)}} \cdot {z^i}} } \mathord{\left/
 {\vphantom {{\sum\limits_{i = 0}^\infty  {\frac{{{\rho ^i}}}{{(c + i)}} \cdot {z^i}} } {\sum\limits_{i = 0}^\infty  {\frac{{{\rho ^i}}}{{(c + i)}} \cdot {z^i}} }}} \right.
 \kern-\nulldelimiterspace} {\sum\limits_{i = 0}^\infty  {\frac{{{\rho ^i}}}{{(c + i)}}} }},(0 < \rho  < 1,c > 0)
\end{equation}
where $\Phi (\rho ,v,c) = \sum\limits_{i = 0}^\infty  {\frac{{{\rho ^i}}}{{{{(c + i)}^v}}} } $ is Lerch¡¯s transcendent function. See \cite{johnson05}, page 526.

Let $(X_\nu,\Theta )$ be  such a bivariate r.v. that their joint distribution is
\[f(x,\theta ) = C\frac{\theta ^{\nu  - 1}}{x!} \exp \{ ax - (d + bx)\theta \} ,(x = 0,1,2 \dots )\]
where $C$ is a normalisation constant.
\cite{gomez14} have found that the marginal distribution of $X$ is the COM-type of a Lerch-type distribution (\ref{eq:Lerch})
\[P({X_\nu } = x) = {{\frac{{{e^{ax}}}}{{{{(d + bx)}^\nu }}}} \frac{b^\nu}{\Phi \big({e^a},\nu,\frac{d}{b}\big)}},(d,b,\nu, \in {\Bbb{R}},a < 0).\]
\end{example}

\begin{example}
COM-Hyper-Poisson distribution (Shifted COM-Poisson distribution, \cite{ahmad07}) with p.m.f.
\[{P_{{X_\nu }}}(x) = {{{{\left( {\frac{{{\lambda ^x}}}{{(a + x)!}}} \right)}^\nu}} \mathord{\left/
 {\vphantom {{{{\left( {\frac{{{\lambda ^x}}}{{(a + x)!}}} \right)}^\nu}} {\sum\limits_{i = 0}^\infty  {{{\left( {\frac{{{\lambda ^i}}}{{(a + i)!}}} \right)}^\nu}} }}} \right.
 \kern-\nulldelimiterspace} {\sum\limits_{i = 0}^\infty  {{{\left( {\frac{{{\lambda ^i}}}{{(a + i)!}}} \right)}^\nu}} }},(a \ge 0,\lambda  > 0).\]
\end{example}

\begin{remark}
\cite{chakraborty15} considered the extended COM-Poisson distribution (ECOMP$(r,\theta ,\alpha ,\beta )$):
\begin{equation} \label{eq:E}
\mathrm{P}(X = k) = \frac{{\Gamma {{(r + k)}^\beta }}}{{{{(k!)}^{\alpha }}}}{\theta ^k}/\sum\limits_{i = 1}^\infty  {\frac{{\Gamma {{(r + i)}^\beta }}}{{{{(i!)}^{\alpha  }}}}{\theta ^i}} \quad (k = 0,1,2, \ldots ),
\end{equation}
where the parameter space is $(r \ge 0,\theta  > 0,\alpha  > \beta ) \cup (r > 0,0 < \theta  < 1,\alpha  = \beta )$. COM-negative binomial distribution is a special case of ECOMP$(r,\theta ,\alpha ,\beta )$ when $\alpha   = 1 ,\beta  = \nu $.
Another generation of COM-type distribution was obtained by \cite{imoto14} (see also \cite{chakraborty15}):
$$
{\rm{P}}(X = x) = \frac{{\Gamma {{(r + x)}^\nu }}}{{{{x!}}}}{p^x}/\sum\limits_{i = 1}^\infty  {\frac{{\Gamma {{(r + i)}^\nu }}}{{{{i!}}}}{p ^i}} \quad (r,v > 0, p \in (0,1))
$$
which includes the negative binomial distribution, but not a COM-Poisson type distribution of negative binomial distribution by Definition \ref{def:cmt}.
\end{remark}

\begin{remark}
Recall the R{\'e}nyi entropy (see \cite{renyi61}) in the information theory, which generalizes the Shannon entropy. The R{\'e}nyi entropy of order $\alpha$ of a discrete r.v. $X$:
$$
H_\alpha ^R(X) = \frac{1}{{1 - \alpha }}{\rm{ln}}\sum\limits_{i = 0}^\infty  {[P} (X = i){]^\alpha },(\alpha  \ne 1).
$$

Then the normalization constant ${C_{{X_\alpha }}}$ in (\ref{eq:four}) has R{\'e}nyi entropy representation ${C_{{X_\alpha }}} = {e^{(\alpha  - 1)H_\alpha ^R(X)}}$,
so ${P_{{X_\nu }}}(x) = P_X^\nu (x){e^{(\alpha  - 1)H_\alpha ^R(X)}}$.

In physics, another generalization of Shannon entropy is the Tsallis entropy, Tsallis entropy of order $\alpha$ of a discrete r.v. $X$ is defined by
$$
H_\alpha ^T(X) = \frac{1}{{1 - \alpha }}\left( {\sum\limits_{i = 0}^\infty  {[P} (X = i){]^\alpha } - 1} \right),(\alpha  \ne 1).
$$
This entropy was introduced by \cite{tsallis88} as a basis for generalizing the Boltzmann-Gibbs statistics.

Hence the normalization constant ${C_{{X_\alpha }}}$ has Tsallis entropy representation ${C_{{X_\alpha }}} = {[1 + (1 - \alpha )H_\alpha ^T(X)]^{ - 1}}$,
then ${P_{{X_\nu }}}(x) = P_X^\nu (x){[1 + (1 - \alpha )H_\alpha ^T(X)]^{ - 1}}$.
\end{remark}

The next result shows that the COM-type of order $\frac{1}{\nu}$ distribution related to $X_{\nu}$ is $X$. Hence, the COM-type distribution has one-to-one correspondence between $X_{\nu}$ and $X$.

\begin{lemma}
$P_{(X_{\nu})_{\frac{1}{\nu}}}(\cdot)= P_{X}(\cdot)$ for any $\nu >0$, then we have $(X_{\nu})_{\frac{1}{\nu}}=X$ in distribution.
\end{lemma}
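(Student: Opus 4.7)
The plan is to unpack Definition \ref{def:cmt} twice and then observe that the only nontrivial thing to check is that the iterated normalization constant cancels against the inner one, after which the probability weights collapse to $P_X(x)$ because $\sum_x P_X(x)=1$.

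First I would apply Definition \ref{def:cmt} to $X$ with order $\nu$ to write
$$P_{X_\nu}(x) = C_{X_\nu}\, P_X^\nu(x), \qquad C_{X_\nu} = \Big(\sum_{y=0}^\infty P_X^\nu(y)\Big)^{-1}.$$
Next, I would apply the same definition again, this time to the r.v.\ $X_\nu$ with order $1/\nu$, to get
$$P_{(X_\nu)_{1/\nu}}(x) = C_{(X_\nu)_{1/\nu}}\, P_{X_\nu}^{1/\nu}(x),\qquad C_{(X_\nu)_{1/\nu}}=\Big(\sum_{y=0}^\infty P_{X_\nu}^{1/\nu}(y)\Big)^{-1}.$$

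Then I would substitute the first display into the second. Since
$$P_{X_\nu}^{1/\nu}(x) = \bigl(C_{X_\nu}\, P_X^\nu(x)\bigr)^{1/\nu} = C_{X_\nu}^{1/\nu}\, P_X(x),$$
the normalizing sum becomes
$$\sum_{y=0}^\infty P_{X_\nu}^{1/\nu}(y) = C_{X_\nu}^{1/\nu}\sum_{y=0}^\infty P_X(y) = C_{X_\nu}^{1/\nu},$$
so $C_{(X_\nu)_{1/\nu}} = C_{X_\nu}^{-1/\nu}$. Plugging back in, the two factors $C_{X_\nu}^{\pm 1/\nu}$ cancel and one obtains $P_{(X_\nu)_{1/\nu}}(x)=P_X(x)$ for every $x$, which is exactly the claimed distributional identity.

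There is no real obstacle here beyond bookkeeping; the only point to be careful with is making sure the convergence condition in Definition \ref{def:cmt} applied at order $1/\nu$ is automatic, which it is because $\sum_y P_{X_\nu}^{1/\nu}(y)=C_{X_\nu}^{1/\nu}<\infty$ whenever the original COM-type of order $\nu$ was well defined. Thus the correspondence $X \leftrightarrow X_\nu$ is a genuine involution on the set of discrete laws admitting a COM-type of the appropriate order.
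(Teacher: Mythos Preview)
Your argument is correct and is essentially the same as the paper's: the paper writes the whole computation as the single line
\[
P_{(X_\nu)_{1/\nu}}(x)=\bigl[C_{X_\nu}P_X^\nu(x)\bigr]^{1/\nu}\Big/\sum_{y}\bigl[C_{X_\nu}P_X^\nu(y)\bigr]^{1/\nu}=P_X(x),
\]
while you spell out the cancellation of $C_{X_\nu}^{\pm 1/\nu}$ and the well-definedness of the second COM-type explicitly. No further comments needed.
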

\begin{proof}
 In fact,
 $$P_{(X_{\nu})_{\frac{1}{\nu}}}(x) =
 \left[{C_{{X_\nu }}} P_{X}^{\nu}(x) \right]^{\frac{1}{\nu}}/ \sum\limits_{x = 0}^{\infty} [{C_{{X_\nu }}} P_{X}^{\nu}(x)]^{\frac{1}{\nu}}=P_{X}(x).$$
\end{proof}
Let $\E_{\nu}X$  be the expectation of its COM-type r.v., that is
\begin{equation}
   \E_{\nu}X =: \E X_{\nu} = \sum\limits_{x = 0}^{\infty} x P_{X_{\nu}}(x),
\end{equation}
then we have $\E_{\nu}f(X) = \sum\limits_{x = 0}^{\infty} f(x) P_{X_{\nu}}(x)$.

The following lemma is an immediate consequence of Lemma 3.1.

\begin{lemma}
$\E _{\frac{1}{\nu}} X_{\nu} = \E X $ and $\E _{\frac{1}{\nu}} f( X_{\nu}) = \E f (X)$  for a measurable function $f$.
\end{lemma}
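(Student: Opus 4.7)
The plan is to reduce both identities directly to Lemma 3.1, which asserts that $(X_\nu)_{1/\nu}$ coincides with $X$ in distribution. By the definition of the subscripted expectation introduced just above the statement, for any discrete random variable $Y$ admitting a COM-type of order $\alpha$ we have $\E_\alpha g(Y) = \E g(Y_\alpha) = \sum_x g(x) P_{Y_\alpha}(x)$. Specialising this convention to $\alpha = 1/\nu$ and $Y = X_\nu$ is exactly what will recast the left-hand sides of the two desired identities as expectations against the law of $(X_\nu)_{1/\nu}$, at which point Lemma 3.1 finishes the job.

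First I would verify that $X_\nu$ itself admits a COM-type of order $1/\nu$, so that the symbol $\E_{1/\nu} X_\nu$ is even meaningful. Using Definition \ref{def:cmt} one computes
\[
\sum_{x=0}^{\infty} P_{X_\nu}^{1/\nu}(x) \;=\; C_{X_\nu}^{1/\nu} \sum_{x=0}^\infty P_X(x) \;=\; C_{X_\nu}^{1/\nu} \;<\; \infty,
\]
which is finite by the standing hypothesis that $C_{X_\nu}$ exists. Next I would expand
\[
\E_{1/\nu} f(X_\nu) \;=\; \sum_{x=0}^{\infty} f(x)\, P_{(X_\nu)_{1/\nu}}(x)
\]
and apply Lemma 3.1 termwise to replace $P_{(X_\nu)_{1/\nu}}(x)$ by $P_X(x)$, which collapses the sum to $\sum_x f(x) P_X(x) = \E f(X)$. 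The first identity $\E_{1/\nu} X_\nu = \E X$ is then simply the special case $f(x) = x$.

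The main (and rather mild) obstacle is purely notational: one must resist reading $\E_{1/\nu} X_\nu$ as integration against the p.m.f.\ of $X_\nu$ itself, since the convention in the definition of $\E_\nu$ is to integrate the argument against the p.m.f.\ of its COM-type of the stated order. Once this is unpacked, Lemma 3.1 does all the work and no further analytic input is required beyond the tacit assumption that $\E f(X)$ is absolutely convergent, which legitimises the termwise manipulation above.
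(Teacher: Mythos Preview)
Your proposal is correct and matches the paper's approach exactly: the paper simply states that the lemma ``is an immediate consequence of Lemma 3.1'' without further detail, and what you have written is precisely the unpacking of that immediate consequence via the definition of $\E_\nu$ and the identity $P_{(X_\nu)_{1/\nu}}(\cdot)=P_X(\cdot)$.
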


\subsection{Kagan's characterization}
Based on the two lemmas, it would facilitate discussing COM-type discrete Fisher information. For a discrete r.v. $X$ with values in $\Bbb{N}$, \cite{kagan01} defined

$$
J_{X}(x)=
\left\{
  \begin{array}{ll}
  \displaystyle 1- \frac{P_{X}(x-1)}{P_{X}(x)},\; if \ P_{X}(x)>0,
  \\
  \\
  \displaystyle 0,\; if~P_{X}(x)=0
   \end{array}
   \right.
$$
and $I_{X}=\E J_{X}^2$ which is a discrete version of the Fisher information.

Now, we consider the COM version of Fisher information, which allow us to apply Kagan's characterization of Poisson distribution. Let
$$
K_{X}(x)=
\left\{
  \begin{array}{ll}
  \displaystyle 1-\left[ \frac{P_{X}(x-1)}{P_{X}(x)}\right]^{\frac{1}{\nu}},\; if~P_{X}(x)>0
  \\
  \\
  \displaystyle 0,\; if~P_{X}(x)=0
   \end{array}
   \right.
$$

Then COM-type discrete Fisher information can be defined by

\begin{equation}\label{eq:cvi}
{{\rm{C}}_\nu }{{\rm{I}}_X}=\E _{\frac{1}{\nu}} \left[K_{X}^2\right] = \sum\limits_{x = 0}^\infty  {\left( {1 - {{\left[ {\frac{{{P_X}(x - 1)}}{{{P_X}(x)}}} \right]}^{\frac{1}{\nu }}}} \right)^2} {P_{{X_{\frac{1}{\nu }}}}}(x) = \sum\limits_{x = 0}^\infty  {\left( {1 - \frac{{{P_{{X_{\frac{1}{\nu }}}}}(x - 1)}}{{{P_{{X_{\frac{1}{\nu }}}}}(x)}}} \right)^2} {P_{{X_{\frac{1}{\nu }}}}}(x) = \E  \left[J _{X_{\frac{1}{\nu}}}^2\right].
\end{equation}

We say that $X\in RSP$ (right side positive), i.e. if $P_{X}(x)>0$ then we have $P_{X}(x+1)>0 $. Lemma 2
 in \cite{kagan01} showed that if $X,Y \in RSP$ are independent r.v.'s., it is trivial to see that $X+Y \in RSP$.
\begin{lemma}(\cite{kagan01})
If $I_{X},I_{Y}< \infty$ and $ X,Y \in RSP$, then
\begin{equation}\label{eq:kagan}
\frac{1}{I_{Z}} \geq  \frac{1}{I_{X}}+\frac{1}{I_{Y}},\, (Z=X+Y)
\end{equation}
with the equality sign holding iff $X,Y,Z$ have Poisson distributions (possibly shifted), i.e.
$$
P_{Z}(z)=e^{-\lambda} \frac{\lambda ^{z-z_{0}}}{(z-z_{0})!},(z=z_{0},z_{0}+1, \dots)
$$
for some integer $z_{0}$ and $\lambda > 0$, with the same expression for $P_{X}(x),P_{Y}(y)$.
\end{lemma}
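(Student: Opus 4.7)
The plan is to mirror the continuous Stam inequality proof---projecting the score of the sum onto a convex combination of the scores of the summands---with the discrete score $J_X$ playing the role of the continuous one. Starting from the convolution $P_Z(z)=\sum_x P_X(x)P_Y(z-x)$ and its shift $P_Z(z-1)=\sum_x P_X(x-1)P_Y(z-x)=\sum_y P_X(z-y)P_Y(y-1)$, dividing by $P_Z(z)$ recognises $P_Z(z-1)/P_Z(z)$ as the conditional expectation of $P_X(X-1)/P_X(X)$, or equivalently of $P_Y(Y-1)/P_Y(Y)$, given $Z=z$. Rewriting this in terms of $J$'s yields the key projection identity
$$J_Z(z)=\E[J_X(X)\mid Z=z]=\E[J_Y(Y)\mid Z=z],$$
so that for every $\alpha+\beta=1$, $J_Z(z)=\E[\alpha J_X(X)+\beta J_Y(Y)\mid Z=z]$. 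The RSP hypothesis is needed here to make sure the ratios are well defined wherever $P_Z(z)>0$.

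The next step is to square and apply conditional Jensen, then take total expectation:
$$I_Z=\E[J_Z(Z)^2]\le \E\bigl[(\alpha J_X(X)+\beta J_Y(Y))^2\bigr]=\alpha^2 I_X+\beta^2 I_Y,$$
the cross term vanishing because $X,Y$ are independent and $\E[J_X(X)]=\sum_x P_X(x)-\sum_x P_X(x-1)=0$ (and the same for $Y$), using RSP to guarantee the telescoping is legitimate. Minimising the right-hand side in $\alpha+\beta=1$ gives the optimal weights $\alpha^{*}=I_Y/(I_X+I_Y)$, $\beta^{*}=I_X/(I_X+I_Y)$, with optimal value $I_XI_Y/(I_X+I_Y)$, from which $1/I_Z\ge 1/I_X+1/I_Y$ follows.

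For the equality case, the Jensen step is tight precisely when $\alpha^{*} J_X(X)+\beta^{*} J_Y(Y)$ is (a.s.) $\sigma(Z)$-measurable, which produces a functional equation $\alpha^{*} J_X(x)+\beta^{*} J_Y(y)=\phi(x+y)$ on the joint support. Setting $y=0$ and $x=0$ in turn and combining gives the Cauchy-type identity $\phi(x+y)+\alpha^{*} J_X(0)+\beta^{*} J_Y(0)=\phi(x)+\phi(y)$, which forces $\phi$ to be affine, and hence $J_X$ and $J_Y$ to be affine in their argument, say $J_X(x)=1-(x-z_0)/\lambda$. This in turn reads as the recursion $P_X(x)/P_X(x-1)=\lambda/(x-z_0)$, which together with RSP, summability, and the finiteness of $I_X$ pins down $P_X$ as a $z_0$-shifted Poisson$(\lambda)$; the same argument gives the analogous conclusion for $Y$, and $Z=X+Y$ then inherits the shifted Poisson form automatically.

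The step I expect to be most delicate is the last one: turning the affine-score conclusion into a clean characterisation inside $\Bbb{N}$. One has to argue that only one sign of the slope produces a non-negative p.m.f.\ supported on a full right-infinite ray (thus $\lambda>0$), that the finiteness of $I_X$ rules out degenerate affine solutions whose $J_X$ would be unbounded on the support, and that the shifts $z_0^{X}$ and $z_0^{Y}$ detected separately from the two recursions line up with the shift of $Z$ (indeed $z_0^{Z}=z_0^{X}+z_0^{Y}$). Once these compatibility issues are pinned down, the characterisation of equality drops out.
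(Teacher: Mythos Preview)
The paper does not give its own proof of this lemma: it is quoted verbatim as a result of \cite{kagan01} and used as a black box to derive Theorem~3.1 for the COM-type Fisher information via the correspondence ${\rm C}_\nu I_X = I_{X_{1/\nu}}$. There is therefore nothing in the paper to compare your argument against.

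That said, your sketch is essentially Kagan's original proof, and the logic is sound: the convolution identity $P_Z(z-1)=\sum_x P_X(x-1)P_Y(z-x)$ does yield $J_Z(Z)=\E[J_X(X)\mid Z]=\E[J_Y(Y)\mid Z]$, conditional Jensen plus independence and $\E J_X(X)=0$ give $I_Z\le \alpha^2 I_X+\beta^2 I_Y$, and optimising over $\alpha+\beta=1$ produces the inequality. One small slip: when you set ``$y=0$ and $x=0$'' to extract the Cauchy equation, you should anchor at the left endpoints $x_0,y_0$ of the supports of $X$ and $Y$ rather than at $0$, since RSP only guarantees a right-infinite ray $\{x_0,x_0+1,\dots\}$; you clearly know this, because you later track the separate shifts $z_0^X,z_0^Y$ and their sum, but the exposition should be consistent. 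With that adjustment, and after checking that an affine $J_X$ with the wrong slope sign would violate nonnegativity or summability, the equality characterisation goes through exactly as you describe.
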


However, for the COM-Poisson distribution of order $\nu$, the convolution of two independent COM-Poisson r.v.s. $X$ and $Y$ may not be the COM-Poisson r.v.'s of order $\nu$ (except $\nu=1$). In view of (\ref{eq:first}), if $X \sim \rm{CMP} ({\lambda _1} ,\nu )$ and $Y \sim \rm{CMP} ({\lambda _2} ,\nu )$ where ${\lambda _1} = \lambda _x^\nu ,{\lambda _2} = \lambda _y^\nu$, we have
\begin{equation} \label{eq:eight}
 P(X+Y=s) =
 \left(\frac{(\lambda_{1}+\lambda_{2})^{s}}{s!}\right)^{\nu}
 \frac{1}{Z(\lambda_{1},\nu) Z(\lambda_{2},\nu)}
 \sum\limits_{k = 0}^s
 \left( \frac{s!}{(s-k)!k!}
 \left(\frac{\lambda_{1}}{\lambda_{1}+\lambda_{2}}\right)^{k} \left(\frac{\lambda_{2}}{\lambda_{1}+\lambda_{2}}\right)^{s-k}
 \right)^{\nu}.
\end{equation}

In order to verify whether (\ref{eq:eight}) is COM-Poisson distributed, we need recurrence relation characterization of COM-Poisson distribution. The following lemma is a property of proportion to characterize COM-Poisson distribution, see also \cite{ahmad07} as a special case of COM-Hyper-Poisson distribution.

\begin{lemma} \label{lem:rec}
If $P(X=n)$ is the p.m.f. of any discrete r.v. $X$, then $X$ is COM-Poisson distribution iff
\begin{equation} \label{eq:rec}
\frac{{P(X = n)}}{{P(X = n - 1)}} = \frac{\lambda }{{{n^\nu }}} \buildrel \Delta \over = {(\frac{\mu }{n})^\nu }, (\nu ,\lambda  > 0,\mu  = {\lambda ^{1/v}}).
\end{equation}
\end{lemma}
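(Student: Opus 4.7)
My plan is to prove both directions of the equivalence separately, with the forward implication being a direct computation from the definition and the reverse implication following by iterating the recurrence and using normalization.

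First, for the ``only if'' direction, assume $X \sim \mathrm{CMP}(\lambda,\nu)$ so that $P(X=n) = \lambda^n/((n!)^\nu Z(\lambda,\nu))$ by (\ref{eq:com}). Forming the ratio,
\begin{equation*}
\frac{P(X=n)}{P(X=n-1)} = \frac{\lambda^n/(n!)^\nu}{\lambda^{n-1}/((n-1)!)^\nu} = \frac{\lambda \cdot ((n-1)!)^\nu}{(n!)^\nu} = \frac{\lambda}{n^\nu},
\end{equation*}
which is exactly (\ref{eq:rec}).

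For the ``if'' direction, suppose the recurrence (\ref{eq:rec}) holds. Iterating it from $n$ down to $1$ gives
\begin{equation*}
P(X=n) = P(X=0)\prod_{k=1}^{n}\frac{\lambda}{k^\nu} = P(X=0)\frac{\lambda^n}{(n!)^\nu}, \quad n=0,1,2,\dots
\end{equation*}
Since $P(X=n)$ is a p.m.f., summing over all $n$ and using the fact that $\sum_{n=0}^\infty \lambda^n/(n!)^\nu = Z(\lambda,\nu) < \infty$ for all $\lambda,\nu>0$ forces $P(X=0) = 1/Z(\lambda,\nu)$. Hence $P(X=n) = \lambda^n/((n!)^\nu Z(\lambda,\nu))$, i.e.\ $X \sim \mathrm{CMP}(\lambda,\nu)$.

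There is no real obstacle here; the only thing to watch is that the recurrence implicitly assumes $P(X=n-1)>0$ whenever $P(X=n)$ is being related to it, so one should note at the outset that the recurrence together with $\lambda>0$ propagates positivity: $P(X=0)>0$ (otherwise all probabilities vanish, contradicting $\sum_n P(X=n)=1$), and then $P(X=n)>0$ for all $n$ by induction. The substitution $\mu = \lambda^{1/\nu}$ is purely cosmetic and needs no argument.
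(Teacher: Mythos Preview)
Your proof is correct and follows essentially the same route as the paper: the paper telescopes the recurrence as $P(X=n) = \frac{P(X=n)}{P(X=n-1)}\cdots\frac{P(X=1)}{P(X=0)}\cdot P(X=0) = \frac{\lambda^n}{(n!)^\nu}P(X=0)$ and then normalizes to get $P(X=0)=Z^{-1}(\lambda,\nu)$, which is exactly your iteration argument. Your write-up is in fact more complete, since you spell out the forward direction and the positivity issue that the paper leaves implicit.
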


\begin{proof}
  It can be derived by the p.m.f. of COM-Poisson distribution (\ref{eq:com}), since
$$
  P(X=n) =\frac{P(X=n)}{P(X=n-1)} \cdot \frac{P(X=n-1)}{P(X=n-2)} \cdot \cdot \cdot \cdot \frac{P(X=1)}{P(X=0)} \cdot P(X=0)
  = \frac{\lambda ^{n}}{(n!)^{\nu}} P(X=0),
$$
$$\sum\limits_{n = 0}^{\infty} P(X=n)=1 \Rightarrow P(X=0)=Z^{-1}(\lambda, \nu)> 0.$$
\end{proof}

Let $ {\mathcal{F_{\nu}}}$ be the family COM-Poison distribution of order $\nu$. It is closed under addition~($X,Y \in {\mathcal{F_{\nu}}} \Rightarrow X + Y \in {\mathcal{F_{\nu}}}$) except when $\nu=1$.

\begin{corollary} \label{cor:cvlt}
If $\nu  \ne 1$, $X \sim {\rm{CMP}}(\lambda_{1},\nu)$ and $Y \sim {\rm{CMP}}(\lambda_{2},\nu)$, then $X+Y \not  \sim   {\rm{CMP}}(\lambda_{1}+\lambda_{2},\nu)$.
\end{corollary}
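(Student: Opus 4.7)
The plan is to use Lemma \ref{lem:rec} as a negative criterion: if $S=X+Y$ were $\text{CMP}(\lambda_1+\lambda_2,\nu)$, then the ratio $P(S=s)/P(S=s-1)$ would have to equal $(\lambda_1+\lambda_2)/s^\nu$ for every $s\ge 1$. So I would fix a small value of $s$, compute the ratio directly from the convolution, and show that the identity forces $\nu=1$, contradicting the hypothesis $\nu\ne 1$.

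The natural choice is $s=2$, since the convolution sum has only three terms. By independence and the CMP p.m.f.\ (\ref{eq:com}),
\begin{equation*}
P(S=0)=\frac{1}{Z(\lambda_1,\nu)Z(\lambda_2,\nu)},\qquad P(S=1)=\frac{\lambda_1+\lambda_2}{Z(\lambda_1,\nu)Z(\lambda_2,\nu)},
\end{equation*}
and
\begin{equation*}
P(S=2)=\frac{1}{Z(\lambda_1,\nu)Z(\lambda_2,\nu)}\Bigl(\frac{\lambda_1^2}{2^\nu}+\lambda_1\lambda_2+\frac{\lambda_2^2}{2^\nu}\Bigr)=\frac{\lambda_1^2+2^\nu\lambda_1\lambda_2+\lambda_2^2}{2^\nu Z(\lambda_1,\nu)Z(\lambda_2,\nu)}.
\end{equation*}
Thus $P(S=2)/P(S=1)=(\lambda_1^2+2^\nu\lambda_1\lambda_2+\lambda_2^2)/\bigl(2^\nu(\lambda_1+\lambda_2)\bigr)$.

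If $S$ were $\text{CMP}(\lambda_1+\lambda_2,\nu)$, Lemma \ref{lem:rec} would require this ratio to equal $(\lambda_1+\lambda_2)/2^\nu$. Clearing denominators gives $\lambda_1^2+2^\nu\lambda_1\lambda_2+\lambda_2^2=(\lambda_1+\lambda_2)^2=\lambda_1^2+2\lambda_1\lambda_2+\lambda_2^2$, i.e.\ $2^\nu\lambda_1\lambda_2=2\lambda_1\lambda_2$. Since $\lambda_1,\lambda_2>0$, this forces $2^\nu=2$, so $\nu=1$, contradicting the hypothesis. Hence $X+Y\not\sim\text{CMP}(\lambda_1+\lambda_2,\nu)$.

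There is no real obstacle here; the whole argument is a one-step ratio test enabled by Lemma \ref{lem:rec}. The only thing to be careful about is not to try to prove the stronger claim that $X+Y$ belongs to no CMP family at all (which would require inspecting two distinct ratios $s=1,2$ to pin down a putative $\lambda$ and derive a contradiction): the statement only rules out the specific parameter $\lambda_1+\lambda_2$, and for that one ratio at $s=2$ suffices.
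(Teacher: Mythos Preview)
Your proof is correct. Both you and the paper invoke Lemma~\ref{lem:rec} to test whether $S=X+Y$ can be $\mathrm{CMP}(\lambda_1+\lambda_2,\nu)$, but the executions differ. The paper writes the general ratio as
\[
\frac{P(S=n)}{P(S=n-1)}=\Bigl(\frac{\lambda_x+\lambda_y}{n}\Bigr)^{\nu}\,\frac{a_n}{a_{n-1}},
\qquad
a_n=\sum_{k=0}^{n}\Bigl(\tbinom{n}{k}p^{k}(1-p)^{n-k}\Bigr)^{\nu},
\]
argues that the CMP hypothesis forces $a_n\equiv a_0=1$, and then rules this out for all $n\ge 1$ via the elementary inequality $\sum b_k^{\nu}\lessgtr 1$ when $\sum b_k=1$ and $\nu\gtrless 1$. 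You instead compute the single ratio at $n=2$ explicitly and reduce the CMP hypothesis to $2^{\nu}=2$. Your argument is shorter and entirely self-contained; the paper's is slightly more structural and, in particular, the sequence $a_n$ it introduces is reused later (Corollary~\ref{cor:cvl}) to prove the stronger statement that $X+Y$ lies in no $\mathrm{CMP}(\cdot,\nu)$ family, which your single-ratio check would not by itself establish. For the precise corollary stated, though, your approach is perfectly adequate and arguably cleaner.
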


\begin{proof}
Apply Lemma \ref{lem:rec} to (\ref{eq:eight}), and let ${\lambda _1} = \lambda _x^\nu ,{\lambda _2} = \lambda _y^\nu $, we have
\begin{equation} \label{eq:six}
\frac{{P(X + Y = n)}}{{P(X + Y = n - 1)}} = {\left( {\frac{{{\lambda _x} + {\lambda _y}}}{n}} \right)^\nu } \cdot \frac{{\sum\limits_{k = 0}^n {{{\left( {\frac{{n!}}{{(n - k)!k!}}{{\left( {\frac{{{\lambda _x}}}{{{\lambda _x} + {\lambda _y}}}} \right)}^k}{{\left( {\frac{{{\lambda _y}}}{{{\lambda _x} + {\lambda _y}}}} \right)}^{n - k}}} \right)}^\nu }} }}{{\sum\limits_{k = 0}^{n - 1} {{{\left( {\frac{{(n - 1)!}}{{(n - 1 - k)!k!}}{{\left( {\frac{{{\lambda _x}}}{{{\lambda _x} + {\lambda _y}}}} \right)}^k}{{\left( {\frac{{{\lambda _y}}}{{{\lambda _x} + {\lambda _y}}}} \right)}^{n - 1 - k}}} \right)}^\nu }} }}{\rm{ }} \buildrel \Delta \over = {\left( {\frac{{{\lambda _x} + {\lambda _y}}}{n}} \right)^\nu }\frac{{{a_n}({\lambda _x},{\lambda _y},\nu )}}{{{a_{n - 1}}({\lambda _x},{\lambda _y},\nu )}},
\end{equation}
for $n = 1,2, \dots $.

We observe that the expression (\ref{eq:six}) is slightly different from the (\ref{eq:rec}).

Compared (\ref{eq:rec}) with (\ref{eq:six}), if  $X+Y \sim  {\rm{CMP}}(\lambda_{1}+\lambda_{2},\nu)$, we must have ${{a_n}({\lambda _x},{\lambda _y},\nu ) = {a_{n - 1}}({\lambda _x},{\lambda _y},\nu )}$ for $n = 1,2, \dots $. So ${a_n}({\lambda _x},{\lambda _y},\nu ) \equiv {a_0}({\lambda _x},{\lambda _y},\nu ) = 1$ for $n = 1,2, \dots $ and $\nu  \in {{\Bbb{R}}^ + } \backslash \{ 1\} $.

On the other hand, for ${\nu  > 1}$ and $n = 1,2, \dots $, we have
\[{{a_n}({\lambda _x},{\lambda _y},\nu ) = \sum\limits_{k = 0}^n {{{\left( {\frac{{n!}}{{(n - k)!k!}}{{\left( {\frac{{{\lambda _x}}}{{{\lambda _x} + {\lambda _y}}}} \right)}^k}{{\left( {\frac{{{\lambda _y}}}{{{\lambda _x} + {\lambda _y}}}} \right)}^{n - k}}} \right)}^\nu } < {a_n}({\lambda _x},{\lambda _y},1) = 1} };\]
for ${\nu  < 1}$ and $n = 1,2, \dots $, we have
\[{{a_n}({\lambda _x},{\lambda _y},\nu ) = \sum\limits_{k = 0}^n {{{\left( {\frac{{n!}}{{(n - k)!k!}}{{\left( {\frac{{{\lambda _x}}}{{{\lambda _x} + {\lambda _y}}}} \right)}^k}{{\left( {\frac{{{\lambda _y}}}{{{\lambda _x} + {\lambda _y}}}} \right)}^{n - k}}} \right)}^\nu } > {a_n}({\lambda _x},{\lambda _y},1) = 1} }\]
The above inequality contradicts that ${a_n}({\lambda _x},{\lambda _y},\nu ) \equiv 1$ for $n = 1,2, \dots $ and $\nu  \in {{\Bbb{R}}^ + } \backslash \{ 1\} $.
\end{proof}

Now, we present the main theorem.
\begin{theorem}
  If $X_{\frac{1}{\nu}},Y_{\frac{1}{\nu}} \in RSP$ and ${{\rm{C}}_\nu }{I}_X,{{\rm{C}}_\nu }{I}_Y< \infty$, then
\begin{equation}\label{eq:five}
\frac{1}{{{\rm{C}}_\nu }{I}_{X+Y}} \geq  \frac{1}{{{\rm{C}}_\nu }{I}_X}+\frac{1}{{{\rm{C}}_\nu }{I}_Y}.
\end{equation}
The equality sign in (\ref{eq:five}) holds iff $X$ has COM-Poisson distribution (possibly shifted):
$$
  P_{X}(x)=\left(\frac{\mu^{x-x_{0}}}{(x-x_{0})!}\right)^{\nu} \bigg/  \sum\limits_{i = 0}^{\infty} (\frac{\mu ^{i}}{i!})^{\nu}, (x=x_{0},x_{0}+1,...)
$$
for some integer $z_{0}$ and $\mu>0$ as well as $Y$.
\end{theorem}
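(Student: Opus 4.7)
The proof is driven by the identity recorded in display (\ref{eq:cvi}), namely
\[
C_\nu I_X \;=\; I_{X_{1/\nu}},
\]
which says that the COM-type Fisher information of $X$ coincides with Kagan's ordinary discrete Fisher information of the $1/\nu$-COM-type $X_{1/\nu}$. Applying this identity to $X$, $Y$, and---under the reading $(X+Y)_{1/\nu} \stackrel{d}{=} X_{1/\nu}+Y_{1/\nu}$---to $X+Y$, the claim (\ref{eq:five}) translates verbatim into
\[
\frac{1}{I_{X_{1/\nu}+Y_{1/\nu}}} \;\ge\; \frac{1}{I_{X_{1/\nu}}} + \frac{1}{I_{Y_{1/\nu}}},
\]
which is exactly the Stam-type inequality of \cite{kagan01} (the Kagan lemma stated above) for the pair $X_{1/\nu}, Y_{1/\nu}$. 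The whole proof is therefore a reduction through the COM-transform to that lemma.

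The concrete steps I would carry out are: (i) transfer independence of $X, Y$ to independence of $X_{1/\nu}, Y_{1/\nu}$, since each transform depends only on its own marginal law; (ii) verify the hypotheses of Kagan's lemma for $X_{1/\nu}, Y_{1/\nu}$---the $RSP$ condition is assumed, and $I_{X_{1/\nu}} = C_\nu I_X < \infty$, $I_{Y_{1/\nu}} = C_\nu I_Y < \infty$ are precisely the other standing assumptions; (iii) invoke the lemma to obtain both the inequality and its equality condition. For the equality direction, Kagan's lemma forces $X_{1/\nu}$ and $Y_{1/\nu}$ to be (possibly shifted) Poisson. Using the involution $X \stackrel{d}{=} (X_{1/\nu})_\nu$ from Lemma 3.1 together with Example 3.1 (which computes the $\nu$-COM-type of Poisson$(\mu)$ to be CMP$(\mu^\nu,\nu)$ with p.m.f. proportional to $(\mu^x/x!)^\nu$, and through which any Poisson shift $x\mapsto x-z_0$ carries across), one recovers exactly the shifted CMP form written in the theorem, and symmetrically for $Y$. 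The converse direction is immediate: if $X,Y$ are of that form then $X_{1/\nu},Y_{1/\nu}$ are shifted Poisson, Kagan's equality holds, and (\ref{eq:five}) becomes an equality.

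The only point that genuinely needs care is the meaning of $X+Y$ in the statement. Read as ordinary convolution it conflicts with Corollary \ref{cor:cvlt}: for $\nu\neq 1$ the sum of independent CMPs is not CMP, so $(X+Y)_{1/\nu}$ would fail to be Poisson and the equality case could never be attained even when both $X$ and $Y$ are CMP. The reading forced by (\ref{eq:cvi}) and by the equality case stated in the theorem is therefore that $X+Y$ denotes the random variable whose $1/\nu$-COM-type equals $X_{1/\nu}+Y_{1/\nu}$; equivalently, addition is performed after the $\nu\to 1/\nu$ transform and then pulled back. With this convention fixed the proof is essentially a one-line application of Kagan's lemma, and I do not foresee any other substantive obstacle.
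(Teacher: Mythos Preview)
Your approach is essentially identical to the paper's: reduce everything through the identity $C_\nu I_X = I_{X_{1/\nu}}$ from (\ref{eq:cvi}), apply Kagan's lemma to the pair $X_{1/\nu},Y_{1/\nu}$, and pull the equality case back through the involution of Lemma~3.1. The paper's proof is in fact only two sentences long and does precisely this.

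Your final paragraph, however, is more honest than the paper. The paper's proof does not address the point you raise: $C_\nu I_{X+Y}$ is by definition $I_{(X+Y)_{1/\nu}}$, computed from the ordinary convolution $P_{X+Y}$, and in general $(X+Y)_{1/\nu}$ is \emph{not} distributed as $X_{1/\nu}+Y_{1/\nu}$. So the translation of (\ref{eq:five}) into Kagan's inequality for $X_{1/\nu},Y_{1/\nu}$ is not automatic; and as you note, under the literal reading the equality case clashes with Corollary~\ref{cor:cvlt}. Your proposed reinterpretation---that ``$X+Y$'' should be understood as the $\nu$-pullback of $X_{1/\nu}+Y_{1/\nu}$---is exactly what makes the argument go through, and is implicitly what the paper must mean, though it never says so. So your proposal matches the paper's proof and is in fact more careful about the one genuinely delicate step.
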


\begin{proof}
   Notice that $(X_{\nu})_{\frac{1}{\nu}}=X$ in distribution, then ${{\rm{C}}_\nu }{I}_X=\E J_{X_{\frac{1}{\nu}}}^2$ by ({\ref{eq:cvi}}). So the Kagan's characterization \label{eq:kagan} is equivalent to (\ref{eq:five}) by the result that $X$ is uniquely determined by $X_{\frac{1}{\nu}}$.
\end{proof}

Instead of that the condition distribution characterizations is related to COM-binomial distribution,  \cite{borges14} showed that the COM-Poisson distribution is the limit distribution of the COM-binomial distribution as $m$ tends to infinity.
\begin{theorem}(COM-binomial distribution approximation)
Consider $m \in \Bbb{N} $ and  $X_{m} \sim {\rm{CMB}}(m,p_{m},\nu)$, with $\nu > 0$ and $p_{m}$ such that $\lim _{m \rightarrow \infty} m^{\nu} p_{m}=\lambda$. Then, for $k = 0,1,2,...$,
$$
\lim_{m \rightarrow \infty} P(X_{m}=k)= \lim _{m \rightarrow \infty} \frac{ {{{{{m} \choose {k}}}^\nu}{p_m^k}{{(1 - p_m)}^{m - k}}} }{N(m,p_m,\nu)}
= \frac{\lambda ^{k}}{(k!)^{\nu}} \frac{1}{Z(\lambda, \nu)},
$$
namely $ \lim _{m \rightarrow \infty} X _{m} \sim {\rm{CMP}}(\lambda, \nu)$.
\end{theorem}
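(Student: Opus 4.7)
The plan is to first factor $(1-p_m)^m$ out of both the numerator $\binom{m}{k}^\nu p_m^k(1-p_m)^{m-k}$ and the normalizer $N(m,p_m,\nu)$, reducing the probability to
\[
P(X_m=k)=\frac{\binom{m}{k}^\nu q_m^k}{\sum_{i=0}^m \binom{m}{i}^\nu q_m^i},\qquad q_m:=\frac{p_m}{1-p_m}.
\]
This simplification is essential because, for $\nu\neq 1$, the bare factor $(1-p_m)^{m-k}$ has no useful limit under the regime $m^\nu p_m\to\lambda$: depending on whether $\nu<1$, $=1$, or $>1$, its limit is $0$, $e^{-\lambda}$, or $1$. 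Since $m^\nu p_m\to\lambda$ forces $p_m\to 0$, one immediately gets $m^\nu q_m\to\lambda$ as well, so $q_m$ is the natural small parameter.

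I would dispatch the numerator by a direct expansion: $\binom{m}{k}=m^k/k!+O(m^{k-1})$ for fixed $k$, so
\[
\binom{m}{k}^\nu q_m^k = \frac{(m^\nu q_m)^k}{(k!)^\nu}(1+o(1))\longrightarrow \frac{\lambda^k}{(k!)^\nu}.
\]
This part is essentially bookkeeping and is a one-line estimate.

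The main obstacle is the denominator, where both the summation range and each term depend on $m$. I would handle it by Tannery's theorem (dominated convergence for series). Pointwise convergence of the $i$-th term to $\lambda^i/(i!)^\nu$ follows from exactly the same computation as for the numerator. For a summable dominating bound uniform in $m$, I would use the crude estimate $\binom{m}{i}\le m^i/i!$ to get
\[
\binom{m}{i}^\nu q_m^i \le \frac{(m^\nu q_m)^i}{(i!)^\nu} \le \frac{(\lambda+1)^i}{(i!)^\nu}
\]
for all sufficiently large $m$; the right-hand side sums to $Z(\lambda+1,\nu)<\infty$, which is the crucial finiteness. Tannery's theorem then gives convergence of the denominator to $Z(\lambda,\nu)$, and dividing the two limits yields $\lambda^k/((k!)^\nu Z(\lambda,\nu))$, completing the argument. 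The delicate point worth checking is that the dominating bound $(\lambda+1)^i/(i!)^\nu$ really is valid uniformly for all $i\le m$ and all $m$ large enough, which follows from $m^\nu q_m\to\lambda$ alone and does not require any monotonicity in $i$.
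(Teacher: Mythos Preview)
Your proof is correct. The key observation that $(1-p_m)^m$ cancels from numerator and denominator, reducing everything to the ratio $q_m=p_m/(1-p_m)$, is exactly the right simplification, and your application of Tannery's theorem with the dominating sequence $(\lambda+1)^i/(i!)^\nu$ is clean; the bound $\binom{m}{i}\le m^i/i!$ holds for all $0\le i\le m$, so uniformity in $i$ is genuine once $m^\nu q_m\le\lambda+1$.

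There is nothing in the paper to compare against directly: the theorem is stated without proof and attributed to Borges et al.\ (2014). The closest in-house argument is the proof of the analogous limit for the COM-negative binomial (the theorem immediately following), where the paper handles the normalizer by writing $\lim_{r\to\infty}\sum_{i=0}^\infty(\cdots)=\lim_{n\to\infty}\lim_{r\to\infty}\sum_{i=0}^n(\cdots)$ without justifying the interchange. Your Tannery-type argument is effectively the rigorous version of that step, supplying the summable majorant that the paper's proof of its own limit theorem leaves implicit.
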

\cite{shmueli05} pointed out that COM-binomial distribution could be seen as a sum of equicorrelated Bernoulli r.v.'s ${\rm{ }}{Z_{i{\rm{ }}}} \ (i = 1, \dots ,m){\rm{ }}$  with joint distribution (see also \cite{borges14} )
$$
P(Z_{1}=z_{1},\dots ,Z_{m}=z_{m}) = \frac{{{{{{m} \choose {k}}}^{\nu-1}}{p^k}{{(1 - p)}^{m - k}}}}{{\sum\limits_{x_{1},...,x_{m} \in \{ 0,1 \}} {{{{{m} \choose {{x_{1}+...+x_{m}}}}}^{\nu-1}}{p^{x_{1}+...+x_{m}}}{{(1 - p)}^{m - (x_{1}+...+x_{m})}}} }}
,
$$
where $(z_{1}, \dots , z_{m}) \in \{0,1\}^{m}, k=z_{1}+\dots +z_{m}$.

\subsection{Approximate to COM-Poisson distribution}
Apart from the fact that COM-binomial approximates to COM-Poisson, the next theorem illustrates that COM-negative binomial distribution is suitable since its limiting distribution is the COM-Poisson. We show that COM-negative binomial r.v. $X \sim {\rm{CMNB}}(r,\nu ,p_{r})$ converges to the COM-Poisson r.v. $X \sim {\rm{CMP}}(\lambda,\nu )$ with $\lambda  = \lim_{r \rightarrow \infty}{r^\nu}\frac{p_{r}}{{1 - p_{r}}}$.

\begin{theorem}\label{thm:foc}
Let $X$ be a r.v. with COM-negative binomial distribution with parameters $(r,\nu ,p_{r})$ in (\ref{eq:cnbd}), denote the p.m.f. as $P(X = k)$, and let $\lambda  = \lim_{r \rightarrow \infty}{r^\nu}\frac{p_{r}}{{1 - p_{r}}}$. Then we get
$$
\mathop {\lim }\limits_{r \to  + \infty } P(X = k)=\frac{{{\lambda ^k}}}{{{{(k!)}^\nu }}} \cdot \frac{1}{{Z(\lambda ,\nu )}},(k = 0,1,2, \dots ).
$$
\end{theorem}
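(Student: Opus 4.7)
The plan is to compute $\lim_{r\to\infty}P(X=k)$ directly from the explicit p.m.f.\ in (\ref{eq:cnbd}), treating the numerator (for fixed $k$) and the normalising sum in the denominator separately. As a preliminary, note that $p_r\in(0,1)$, $r^\nu\to\infty$ and $r^\nu p_r/(1-p_r)\to\lambda<\infty$ force $p_r\to0$; hence $\mu_r:=r^\nu p_r\to\lambda$ as well. After cancelling the common factor $(1-p_r)^r$ between numerator and denominator of (\ref{eq:cnbd}), we are left with $P(X=k)=a_{r,k}/S_r$, where
\begin{equation*}
a_{r,i}:=\left(\frac{\Gamma(r+i)}{i!\,\Gamma(r)}\right)^{\!\nu}p_r^i=\frac{\mu_r^i}{(i!)^\nu}\prod_{j=0}^{i-1}\!\left(1+\frac{j}{r}\right)^{\!\nu},\qquad S_r:=\sum_{i=0}^\infty a_{r,i},
\end{equation*}
having used the identity $\Gamma(r+i)/\Gamma(r)=r^i\prod_{j=0}^{i-1}(1+j/r)$.

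For any fixed $k$ the finite product tends to $1$, so $a_{r,k}\to\lambda^k/(k!)^\nu$ immediately. It therefore suffices to show $S_r\to Z(\lambda,\nu)$. The lower bound $\liminf_{r\to\infty}S_r\ge Z(\lambda,\nu)$ is immediate from Fatou's lemma against counting measure on $\mathbb{N}$. For the matching upper bound I would use a tail estimate based on the consecutive-ratio
\begin{equation*}
\frac{a_{r,i+1}}{a_{r,i}}=\left(\frac{r+i}{i+1}\right)^{\!\nu}p_r.
\end{equation*}
Since $\mu_r\le\lambda+1$ for all sufficiently large $r$, a short case split (according to whether $i\le r$ or $i>r$) produces constants $i_0=i_0(\lambda,\nu)$ and $R_0$ such that this ratio is at most $\tfrac12$ whenever $i\ge i_0$ and $r\ge R_0$. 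Consequently $\sum_{i>N}a_{r,i}\le 2a_{r,N+1}$ for every $N\ge i_0-1$ and every $r\ge R_0$, and the pointwise limit $a_{r,N+1}\to\lambda^{N+1}/((N+1)!)^\nu$ allows me to pick $N$ large enough that this tail is uniformly below any prescribed $\varepsilon$. Combining with the termwise convergence of the partial sum $\sum_{i\le N}a_{r,i}$ yields $S_r\to Z(\lambda,\nu)$, and hence $P(X=k)\to\lambda^k/\bigl((k!)^\nu Z(\lambda,\nu)\bigr)$.

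The main technical obstacle is precisely this uniform-in-$r$ tail control of $S_r$: the naive bound $\prod_{j=0}^{i-1}(1+j/r)^\nu\le e^{\nu i(i-1)/(2r)}$ is not summable after fixing $r$, so a direct appeal to dominated convergence with an $r$-free dominating sequence fails. The ratio argument sketched above sidesteps this by exploiting the decay $p_r=O(r^{-\nu})$, which dominates the growth of $(r+i)^\nu$ once $i$ exceeds a $(\lambda,\nu)$-dependent threshold. Everything else amounts to the routine asymptotics $\Gamma(r+i)/\Gamma(r)\sim r^i$ together with $r^\nu p_r\to\lambda$.
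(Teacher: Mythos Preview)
Your proof is correct and follows essentially the same route as the paper: both cancel the common factor, show that the unnormalised weight $\bigl(\Gamma(r+k)/(k!\,\Gamma(r))\bigr)^\nu p_r^{\,k}$ tends to $\lambda^k/(k!)^\nu$ for each fixed $k$, and then argue that the normalising sum converges to $Z(\lambda,\nu)$. The only substantive difference is that the paper simply writes the normaliser limit as $\lim_{n\to\infty}\lim_{r\to\infty}\sum_{i=0}^n(\cdots)$ without justifying the interchange, whereas your ratio estimate $a_{r,i+1}/a_{r,i}=\bigl((r+i)/(i+1)\bigr)^\nu p_r\le\tfrac12$ for $i\ge i_0$, $r\ge R_0$ supplies the uniform tail control that makes this step rigorous.
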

\begin{proof}
Notice that for large $r$, $p_{r} \approx \frac{\lambda }{{{r^\nu} + \lambda }}$, substitute it into p.m.f. (\ref{eq:cnbd}), then we can obtain
\begin{align*}
P(X=k) &\approx {\frac{{{\lambda ^k}}}{{{{(k!)}^\nu }}} \cdot {{\left( {\frac{{\Gamma (r + k)}}{{\Gamma (r)\;{r^k}}}} \right)}^\nu } \cdot \frac{1}{{{{(1 + \lambda /{r^\nu})}^k}}}  \cdot \frac{\left( {\frac{{{r^\nu}}}{{{r^\nu} + \lambda }}} \right)^r}{C(r,\nu,\frac{\lambda}{r^\nu+\lambda})}}.\\
\end{align*}

And notice that
$\mathop {\lim }\limits_{r \to  + \infty } {\left( {\frac{{\Gamma (r + k)}}{{\Gamma (r)\;{r^k}}}} \right)^\nu } \cdot \frac{1}{{{{(1 + \lambda /{r^\nu})}^k}}} = 1,(k = 0,1,2 \dots )$ and
\begin{align*}
\mathop {\lim }\limits_{r \to \infty } {{{\left( {\frac{{{r^\nu} + \lambda }}{{{r^\nu}}}} \right)}^r}C(r,\nu ,p_{r})} &= \mathop {\lim }\limits_{n \to  + \infty } \mathop {\lim }\limits_{r \to  + \infty } \sum\limits_{i = 0}^n {\frac{{{\lambda ^i}}}{{{{(i!)}^\nu }}} \cdot {{\left( {\frac{{\Gamma (r + i)}}{{\Gamma (r)\;{r^i}}}} \right)}^\nu } \cdot \frac{1}{{{{(1 + \lambda /{r^\nu})}^i}}}}  \\
&= \mathop {\lim }\limits_{n \to  + \infty } \sum\limits_{i = 0}^n {\frac{{{\lambda ^i}}}{{{{(i!)}^\nu }}}}  = Z(\lambda ,\nu ).
\end{align*}
Hence,
\[\mathop {\lim }\limits_{r \to  \infty } P(X=k)=
\frac{{{\lambda ^k}}}{{{{(k!)}^\nu }}} \cdot \frac{1}{{Z(\lambda ,\nu )}}\]
holds.
\end{proof}

\section{Other properties and functional operator characterization}

\subsection{Other properties}

A discrete r.v. $X$ obeys the discrete pseudo compound Poisson~(DPCP) distribution if its p.g.f. has the form:
\begin{equation}\label{eq:dpcp}
G_X(z) = \sum\limits_{n = 0}^\infty P(X = n)z^n  = \exp\left(\sum\limits_{k = 1}^\infty  \alpha_k \lambda (z^k - 1)\right), \quad (|z| \le 1)
\end{equation}
with parameters $(\alpha_1 \lambda,\alpha_2 \lambda, \ldots ) \in \mathbb{R}^\infty$ satisfying
\[{\sum\limits_{k = 1}^\infty  {\left| {\lambda {\alpha _k}} \right|}  < \infty ,{\alpha _k} \in {\Bbb{R}},\lambda  > 0}.\]

The explicit expression for the p.m.f. of
the DPCP distribution is given by
$$
  P_{n} = \left({\alpha _n}\lambda  +  \dots  + \sum\limits_{\scriptstyle{k_1} +  \dots  + {k_u} +  \dots {k_n} = i,{k_u} \in {\Bbb N }\hfill\atop
    \scriptstyle1\cdot{k_1} +  \dots  + u{k_u} +  \dots  + n{k_n} = n\hfill} {\frac{{\alpha _1^{{k_1}}\alpha _2^{{k_2}} \dots \alpha _n^{{k_n}}{\lambda ^i}}}{{{k_1}!{k_2}! \cdots {k_n}!}}}  +  \cdots  + \frac{{\alpha _1^n{\lambda ^n}}}{{n!}} \right) {\mathrm{e}^{ -\lambda }},(n=0,1, \dots ).
$$
For more theoretical properties of DPCP distribution, see \cite{zhang14}, \cite{zhang17} and references therein.

If all the $\alpha_k$ are non-negative,  (\ref{eq:dpcp}) is the p.g.f. of discrete compound Poisson~(DCP) distribution. Let $N$ be Poisson distributed with parameter $\lambda$ and ${Y_i}~(i=0,1, \dots )$ be i.i.d. discrete r.v.'s with $P\{ {Y_1} = k\}  = {\alpha _k}$, assure that ${Y_i}$ and $N$ are independent, then DCP distributed r.v. $X$ can be decomposed as
\[X = {Y_1} + {Y_2} +  \dots  + {Y_{N}},\]
We refer the interested reader to the survey by section 9.3 of \cite{johnson05} for references on DCP distribution and many other issues can be found in \cite{zhang14} and \cite{zhang16}.

Let $\mathcal{F}$ be a family of DCP distribution, the model must close under addition since the sum of DCP distributed r.v.'s maintains the characterization of DCP distribution. More precisely, \cite{janossy50} showed that:

Let $ {\mathcal{F_{\mu}}}$ be a family of non-negative integer-valued r.v. $X$ which can be parameterized by their mean $\mu$. The member of $ {\mathcal{F_{\mu}}}$ is closed under addition (if $X \in {{\cal F}_{{\mu _1}}},Y \in {{\cal F}_{{\mu _2}}} \Rightarrow X + Y \in {{\cal F}_{{\mu _1} + {\mu _2}}}$), where $ {\mu} $ runs over all non-negative real numbers.

Then under the conditions above, iff ${X_{\mu} }$  is discrete compound Poisson distributed with finite expectation condition.

Moreover, given a parametric model, we say that it is ``partially closed under addition" if for each r.v. $X$ belonging to this model, the sum of any number of independent copies of $X$ also belongs to this parametric model. Given a biparametric count model (mean $\mu  = \E X$ and dispersion index $d = {\rm{Var}}X/{\E X}$) which satisfies the regularity conditions:

1. Admit a change of variables so that they can be parameterized by their mean $\mu $, and dispersion index $d$.

2. The domain of the parameter $\mu$ is ${{\Bbb{R}}^ + }$. 3. Admit a p.g.f. continuous in $\mu$.

In order to be partially closed under addition, a necessary and sufficient condition is that its p.g.f. can be expressed as a discrete compound Poisson distribution, see \cite{puig06}.

With the condition of finite expectation, we can verify a distribution which is not discrete compound Poisson, if we can show that the sum of independent r.v. is not closed under addition. The mean of COM-Poisson r.v. $\E X$ can be closely approximated by the following (see \cite{shmueli05}),
\[\E X = \lambda \frac{{d\{ \ln [Z(\lambda ,\nu )]\} }}{{d\lambda }} \approx {\lambda ^{1/\nu }} - \frac{{\nu - 1}}{{2 \nu}}.\]
$\E X$ can run over all non-negative real numbers since $\lambda  \in (0,\infty)$.

Fix the $\nu~(\nu > 0)$, let $ {\mathcal{F_{\nu}}}$ be the family of COM-Poison r.v. of order $\nu$. If $\nu$ is given, then $\lambda$ is determined by $\E X$, thus $ {\mathcal{F_{\nu}}}$ is parameterized by the $\lambda$. When $\nu=1$, the well-known family of Poisson r.v.'s with mean $\lambda$ which is closed under addition. The question emerges as to whether the COM-Poisson r.v. has the same property. The answer to this question is not true except $\nu=1$.

\begin{corollary}\label{cor:cvl}
If $X,Y \in {\mathcal{F_{\nu}}}$ and $\nu  \neq 1$,  then $X+Y \notin  {\mathcal{F_{\nu}}}$ except $\nu = 1$. So COM-Poisson of order $ \nu ~(\nu \neq 1) $ is not DCP distribution.
\end{corollary}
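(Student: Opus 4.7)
My plan is to dispose of the two assertions in turn, leveraging Lemma \ref{lem:rec} (the recurrence characterization) for the convolution part and the Janossy--Puig principle recalled just above the corollary for the DCP part.

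For the first assertion, I strengthen the argument of Corollary \ref{cor:cvlt}: I claim that $X+Y$ fails to be $\mathrm{CMP}(\lambda',\nu)$ for \emph{any} $\lambda' > 0$, not merely for $\lambda' = \lambda_1 + \lambda_2$. Suppose $X+Y \sim \mathrm{CMP}(\lambda',\nu)$. Then Lemma \ref{lem:rec} gives $P(X+Y=n)/P(X+Y=n-1) = \lambda'/n^\nu$, whereas (\ref{eq:six}) gives this same ratio as $((\lambda_x + \lambda_y)/n)^\nu \cdot a_n/a_{n-1}$ (writing $\lambda_1 = \lambda_x^\nu$, $\lambda_2 = \lambda_y^\nu$ as in that proof). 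Equating the two expressions forces $a_n/a_{n-1}$ to be a constant $c$ independent of $n$; since $a_0 = 1$, this means $a_n = c^n$ for every $n \ge 0$. With $p = \lambda_x/(\lambda_x+\lambda_y) \in (0,1)$, the direct computations $a_1 = p^\nu + (1-p)^\nu$ and $a_2 = p^{2\nu} + 2^\nu p^\nu(1-p)^\nu + (1-p)^{2\nu}$ yield
\[
a_2 - a_1^2 \;=\; \bigl(2^\nu - 2\bigr)\,p^\nu\,(1-p)^\nu,
\]
which vanishes only when $\nu = 1$. Since $\nu \neq 1$, this contradicts $a_2 = c^2 = a_1^2$, so $X + Y \notin \mathcal{F}_\nu$.

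For the second assertion, I invoke the contrapositive of the Janossy--Puig principle cited just above the corollary: a count family that is parameterizable by its mean $\mu$ and has finite expectation is closed under addition if and only if it is DCP. The preceding discussion explicitly verifies that $\mathcal{F}_\nu$ admits such a parameterization (the mean of $\mathrm{CMP}(\lambda,\nu)$ is a monotone function of $\lambda$ sweeping out $(0,\infty)$) and has finite mean. Since the first assertion shows that $\mathcal{F}_\nu$ is not closed under addition for $\nu \neq 1$, it cannot be a DCP family.

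The main obstacle I anticipate is the strengthening performed in the first paragraph: the existing proof of Corollary \ref{cor:cvlt} only rules out the specific candidate $\lambda' = \lambda_1 + \lambda_2$ (which reduces to demanding $a_n \equiv 1$), whereas the statement here requires excluding \emph{all} geometric sequences $a_n = c^n$. The identity $a_2 - a_1^2 = (2^\nu - 2)p^\nu(1-p)^\nu$ is the decisive computation that makes this extension clean; once it is in hand, invoking Janossy--Puig for the DCP half is essentially a citation.
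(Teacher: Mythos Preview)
Your argument is correct. Both parts are sound: the identity $a_2 - a_1^2 = (2^\nu - 2)\,p^\nu(1-p)^\nu$ is verified by direct expansion, and since $a_0=1$ forces $c=a_1$, the contradiction $a_2 \neq a_1^2$ for $\nu\neq 1$ disposes of every candidate $\lambda'$ at once. The DCP conclusion then follows from the Janossy--Puig principle exactly as you say, and this is also how the paper treats that half (it is essentially a citation, not a separate argument).

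Your route to the first assertion, however, differs from the paper's. Both proofs begin by observing that membership in $\mathcal{F}_\nu$ would force $a_n/a_{n-1}$ to be a constant $f$, hence $a_n$ geometric. At that point the paper argues asymptotically: assuming $f\neq 1$ it asserts that $\lim_n a_n$ is neither $0$ nor $\infty$, contradicting $a_n = f^{n}$, while the case $f=1$ is implicitly covered by the earlier Corollary~\ref{cor:cvlt}. You instead kill all values of $f$ simultaneously with the single finite check $a_2 \neq a_1^2$. This is more elementary and also more robust: the paper's asymptotic claims are delicate (for instance, when $\nu>1$ one actually has $a_n\to 0$, since $a_n=\sum_k p_k^\nu$ with $p_k$ the $\mathrm{Bin}(n,p)$ probabilities and $\max_k p_k\to 0$; the contradiction with $f<1$ then rests on a rate comparison that the paper does not make explicit). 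Your two-term computation sidesteps this entirely, and it automatically subsumes the $f=1$ case that the paper has to treat separately.
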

\begin{proof}

Compared (\ref{eq:rec}) with (\ref{eq:six}), if  $X+Y \in  {\mathcal{F_{\nu}}}$, we must have $\frac{{{a_n}({\lambda _x},{\lambda _y},\nu )}}{{{a_{n - 1}}({\lambda _x},{\lambda _y},\nu )}} = f({\lambda _x},{\lambda _y}) > 0$. So ${a_n}({\lambda _x},{\lambda _y},\nu ) = f{({\lambda _x},{\lambda _y})^{(n - 1)}}$ for $n = 1,2, \dots $.

Assume $f({\lambda _x},{\lambda _y}) \ne 1$, by using p.m.f. of COM-binomial distribution,  we have
\[\begin{array}{l}
0 = \mathop {\lim }\limits_{n \to \infty } {a_n}({\lambda _x},{\lambda _y},\nu ) = \mathop {\lim }\limits_{n \to \infty } \sum\limits_{k = 0}^n {{{\left( {\frac{{n!}}{{(n - k)!k!}}{{\left( {\frac{{{\lambda _x}}}{{{\lambda _x} + {\lambda _y}}}} \right)}^k}{{\left( {\frac{{{\lambda _y}}}{{{\lambda _x} + {\lambda _y}}}} \right)}^{n - k}}} \right)}^\nu }}  \ne 0,(f({\lambda _x},{\lambda _y}) < 1)\\
\infty  = \mathop {\lim }\limits_{n \to \infty } {a_n}({\lambda _x},{\lambda _y},\nu ) = \mathop {\lim }\limits_{n \to \infty } \sum\limits_{k = 0}^n {{{\left( {\frac{{n!}}{{(n - k)!k!}}{{\left( {\frac{{{\lambda _x}}}{{{\lambda _x} + {\lambda _y}}}} \right)}^k}{{\left( {\frac{{{\lambda _y}}}{{{\lambda _x} + {\lambda _y}}}} \right)}^{n - k}}} \right)}^\nu }}  \ne \infty ,(f({\lambda _x},{\lambda _y}) > 1)
\end{array}.\]
Therefore, it leads to the contradiction.
\end{proof}

Corollary~\ref{cor:cvlt} and Corollary~\ref{cor:cvl} also provide two characterizations of Poisson distribution:

\begin{corollary}
Fix the $\nu~(\nu > 0)$, if $X \sim {\rm{CMP}}(\lambda_{2},\nu)$ and $Y \sim {\rm{CMP}}(\lambda_{2},\nu)$, then $X+Y \sim   {\rm{CMP}}(\lambda_{1}+\lambda_{2},\nu)$ iff $X,Y$ are Poisson distributed.
\end{corollary}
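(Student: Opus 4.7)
The plan is to derive this corollary as a direct consequence of Corollary~\ref{cor:cvlt} combined with the classical fact that independent Poisson random variables are closed under addition. The statement splits into two implications, and once Corollary~\ref{cor:cvlt} is in hand both directions are short, so essentially all the real work has already been packaged into the earlier recurrence-formula argument.

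For the forward implication (Poisson $\Rightarrow$ closure), I would observe that $X$ and $Y$ being Poisson corresponds exactly to the case $\nu=1$, i.e.\ $X\sim\mathrm{CMP}(\lambda_1,1)$ and $Y\sim\mathrm{CMP}(\lambda_2,1)$. Using the well-known Poisson p.g.f.\ $\E[z^X]=e^{\lambda_1(z-1)}$ and $\E[z^Y]=e^{\lambda_2(z-1)}$ together with independence, $\E[z^{X+Y}]=e^{(\lambda_1+\lambda_2)(z-1)}$, so $X+Y\sim\mathrm{Poisson}(\lambda_1+\lambda_2)=\mathrm{CMP}(\lambda_1+\lambda_2,1)$. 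This is the standard one-line proof and requires no new machinery.

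For the converse implication (closure $\Rightarrow$ Poisson), I would argue by contrapositive. Suppose $X+Y\sim\mathrm{CMP}(\lambda_1+\lambda_2,\nu)$. If $\nu\neq 1$, then Corollary~\ref{cor:cvlt} asserts that $X+Y\not\sim\mathrm{CMP}(\lambda_1+\lambda_2,\nu)$, a direct contradiction. Hence $\nu=1$, which forces both $X\sim\mathrm{CMP}(\lambda_1,1)$ and $Y\sim\mathrm{CMP}(\lambda_2,1)$ to be Poisson distributed with means $\lambda_1$ and $\lambda_2$ respectively.

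The only step that deserves a moment of thought is verifying that Corollary~\ref{cor:cvlt} genuinely covers every $\nu\in\mathbb{R}^+\setminus\{1\}$ — i.e.\ that both the $\nu>1$ regime (using ${a_n}({\lambda_x},{\lambda_y},\nu)<1$) and the $\nu<1$ regime (using ${a_n}({\lambda_x},{\lambda_y},\nu)>1$) are ruled out. Since Corollary~\ref{cor:cvlt} handles both cases via the recurrence characterization in Lemma~\ref{lem:rec} together with the strict inequality from Jensen-type comparison against the binomial expansion, this covers all $\nu\neq 1$, and so the present corollary follows without any further obstacle. In short, there is no hard step left here; the corollary is essentially a repackaging of the contrapositive of Corollary~\ref{cor:cvlt} plus the trivial forward direction.
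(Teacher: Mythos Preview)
Your proposal is correct and matches the paper's approach: the paper presents this corollary without a separate proof, simply as an immediate consequence of Corollary~\ref{cor:cvlt} (together with the classical closure of Poisson under convolution for the trivial direction). Your write-up makes explicit exactly the two-line argument the paper leaves implicit, so there is nothing to add or correct.
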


\begin{corollary}
Fix the $\nu~(\nu > 0)$, if $X,Y \in {\mathcal{F_{\nu}}}$,  then $X+Y \in  {\mathcal{F_{\nu}}}$  iff $X,Y$ are Poisson distributed.
\end{corollary}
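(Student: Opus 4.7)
The plan is to read this corollary as the contrapositive repackaging of Corollary~\ref{cor:cvl}, supplemented with the classical Raikov-type fact that Poisson is closed under addition. Because $\nu$ is fixed, the family $\mathcal{F}_\nu$ is determined, and the statement quantifies over the parameters $\lambda_1,\lambda_2$ implicit in $X,Y$. I would structure the proof as two short implications, using Lemma~\ref{lem:rec} as the only analytic tool needed.

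For the ``if'' direction, I would first note that by hypothesis both $X$ and $Y$ lie in $\mathcal{F}_\nu$, so in particular $\nu$ is common to both. If I further assume $X,Y$ are Poisson, then $\nu=1$, and from the convolution identity~(\ref{eq:first}) with $\nu=1$ the binomial theorem collapses the inner sum to $1$, yielding $X+Y \sim \mathrm{CMP}(\lambda_1+\lambda_2,1)$, which lies in $\mathcal{F}_1 = \mathcal{F}_\nu$. This step is routine and requires no new work beyond specializing Corollary~\ref{cor:cvlt}.

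For the ``only if'' direction, I would argue by contraposition. Suppose $X,Y$ are \emph{not} Poisson, i.e.\ $\nu\neq 1$. Then $X\sim \mathrm{CMP}(\lambda_1,\nu)$ and $Y\sim \mathrm{CMP}(\lambda_2,\nu)$ with $\nu\neq 1$, and Corollary~\ref{cor:cvl} directly yields $X+Y \notin \mathcal{F}_\nu$. Hence $X+Y\in\mathcal{F}_\nu$ forces $\nu=1$, which is exactly the condition that $X,Y$ be Poisson.

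There is essentially no obstacle at this stage: all the analytic content---the recurrence-ratio formula~(\ref{eq:rec}), the explicit form~(\ref{eq:six}) for the convolution ratio, and the asymptotic behavior of $a_n(\lambda_x,\lambda_y,\nu)$ as $n\to\infty$ for $\nu\neq 1$---was already absorbed into Corollary~\ref{cor:cvl}. The only care needed is to make sure the biconditional is set up correctly: ``$X,Y$ Poisson'' is interpreted as ``$\nu=1$'' under the standing hypothesis $X,Y\in\mathcal{F}_\nu$, so that the two implications combine cleanly into a characterization of the Poisson law as the unique member of $\bigcup_{\nu>0}\mathcal{F}_\nu$ that is closed under independent addition.
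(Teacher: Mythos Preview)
Your proposal is correct and matches the paper's approach exactly: the paper presents this corollary without a separate proof, simply as an immediate repackaging of Corollary~\ref{cor:cvl} (for the ``only if'' direction via contraposition) together with the elementary closure of the Poisson family under convolution (for the ``if'' direction). Your reading of ``$X,Y$ Poisson'' as equivalent to $\nu=1$ under the standing hypothesis $X,Y\in\mathcal{F}_\nu$ is precisely the intended interpretation.
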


The next characterization of DPCP distribution in \cite{zhang14} is a direct result from L{\'e}vy-Wiener theorem.

\begin{lemma}(L{\'e}vy-Wiener theorem)
Let $F(\theta ) = \sum\limits_{k =  - \infty }^\infty  {{c_k}{e^{ik\theta }}} ,\theta  \in [0,2\pi ]$ be a absolutely convergent Fourier series with $\left\| F \right\| = \sum\limits_{k =  - \infty }^\infty  {\left| {{c_k}} \right|}  < \infty $. The values of $F(\theta )$ lie on a curve $C$, and $H(t)$ is an analytic (not necessarily single-valued) function of a complex variable which is regular at every point of $C$. Then $H[F(\theta )]$ has an absolutely convergent Fourier series.
\end{lemma}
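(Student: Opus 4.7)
The plan is to recognize this as the classical Wiener--Lévy theorem and to organize the proof around the Banach-algebra structure of the Wiener algebra $W(\mathbb{T})$ of functions on the circle with absolutely convergent Fourier series, equipped with the norm $\|F\|=\sum_{k=-\infty}^{\infty}|c_k|$. The goal is to realize $H\circ F$ as an element of $W(\mathbb{T})$ via the holomorphic functional calculus, which will immediately yield the absolutely convergent Fourier expansion of $H[F(\theta)]$.

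First I would verify that $(W(\mathbb{T}),\|\cdot\|)$ is a commutative Banach algebra with unit: completeness is the $\ell^{1}$--completeness of the coefficient sequence, and the submultiplicativity $\|FG\|\le\|F\|\|G\|$ follows from the fact that the Fourier coefficients of a product are convolutions of the coefficient sequences. Next, I would identify the Gelfand spectrum of $W(\mathbb{T})$ with the circle itself, so that for any $F\in W(\mathbb{T})$ the spectrum $\sigma(F)$ in the algebraic sense coincides with the range $C=F([0,2\pi])$. The crucial step enabling this identification is Wiener's inversion lemma: if $F\in W(\mathbb{T})$ vanishes nowhere, then $1/F\in W(\mathbb{T})$. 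Granting this, one then invokes the holomorphic functional calculus: since $H$ is analytic on a neighborhood $U\supset C$, one may choose a contour $\Gamma\subset U$ enclosing $C$ and define
\[
H(F)\;=\;\frac{1}{2\pi i}\oint_{\Gamma} H(w)\,(w-F)^{-1}\,dw,
\]
where the integrand is a $W(\mathbb{T})$-valued continuous function on $\Gamma$ (well-defined because $w\notin C=\sigma(F)$ forces $w-F$ to be invertible in $W(\mathbb{T})$ by Wiener's lemma). The Bochner integral then converges in the Banach-algebra norm, so $H(F)\in W(\mathbb{T})$, which is exactly the assertion that $H[F(\theta)]$ has an absolutely convergent Fourier series. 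A parallel patching argument by a finite open cover of the circle combined with local Taylor expansions of $H$ around each value $F(\theta_{0})$ handles the ``not necessarily single-valued'' case, since the local branches of $H$ agree on overlaps.

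The main obstacle is Wiener's inversion lemma itself; the remainder of the argument is essentially bookkeeping once this is in hand. The classical route is a localization-and-multiplier argument: for each $\theta_{0}$, approximate $F$ near $\theta_{0}$ by the constant $F(\theta_{0})\neq 0$, invert locally via the Neumann series $(F(\theta_{0}))^{-1}\sum_{n\ge 0}(1-F/F(\theta_{0}))^{n}$ which converges in $W$-norm on a small arc, cover the circle by finitely many such arcs, and paste the local inverses with a $W(\mathbb{T})$-partition of unity (constructed from trigonometric polynomials). This requires the nontrivial fact that $W(\mathbb{T})$ is regular, i.e.\ it contains enough bump-like functions to separate points from closed sets, which in turn follows from Fejér-kernel truncation estimates. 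Once Wiener's lemma is established, the Cauchy integral representation upgrades it automatically to the statement for arbitrary $H$ analytic on a neighborhood of $C$, completing the proof.
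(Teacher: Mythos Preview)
The paper does not prove this lemma at all; it is stated as the classical L\'evy--Wiener theorem and simply quoted as background for Lemma~\ref{lem:cd} (the DPCP characterization). Your outline is the standard Gelfand--theory proof and is correct in its architecture: $W(\mathbb{T})$ is a commutative unital Banach algebra, its maximal ideal space is the circle, so $\sigma(F)=F([0,2\pi])=C$, and the holomorphic functional calculus then produces $H(F)\in W(\mathbb{T})$ via the Cauchy integral. The reduction to Wiener's inversion lemma and the localization/partition-of-unity argument you sketch for that lemma are also the classical route. Since the paper offers no proof to compare against, there is nothing further to contrast; your proposal simply supplies what the paper takes for granted.
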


\begin{lemma}\label{lem:cd}
(Characterization of DPCP distribution) For any discrete r.v. $X$, its p.g.f. $G(z)$ has no zeros iff $X$ is DPCP distributed.
\end{lemma}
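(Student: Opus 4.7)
The statement is a biconditional; I would prove the two directions separately, leveraging the Lévy–Wiener lemma cited just above. The easy direction (``$X$ is DPCP $\Rightarrow G$ has no zeros'') is immediate from (\ref{eq:dpcp}): if $G_X(z)=\exp\bigl(\sum_{k=1}^\infty \alpha_k\lambda(z^k-1)\bigr)$, then the inner series converges absolutely on $|z|\le 1$ because $\sum|\lambda\alpha_k|<\infty$, and the complex exponential never vanishes, so $G_X(z)\ne 0$ on the closed unit disk.

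For the nontrivial direction, assume $G$ has no zeros on $|z|\le 1$. The plan is to construct a single-valued analytic logarithm of $G$ and show that its Taylor coefficients are absolutely summable; reading them off then produces the DPCP representation. Since $\sum_{n\ge 0}P(X=n)=1$, the Fourier series $G(e^{i\theta})=\sum_{n=0}^\infty P(X=n)e^{in\theta}$ is absolutely convergent. Its image curve $\{G(e^{i\theta}):\theta\in[0,2\pi]\}$ avoids $0$, so $H(t)=\log t$ is regular at every point of it. The Lévy–Wiener lemma then delivers an absolutely convergent Fourier expansion $\log G(e^{i\theta})=\sum_{k\in\mathbb{Z}}d_k e^{ik\theta}$. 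Because $G$ is analytic and non-vanishing on a neighborhood of $|z|\le 1$, the argument principle gives winding number zero, hence a single-valued analytic branch of $\log G(z)$ on that neighborhood; this forces $d_k=0$ for $k<0$, so $\log G(z)=\sum_{k\ge 0}b_k z^k$ with $b_k\in\mathbb{R}$ (since $G$ has real Taylor coefficients) and $\sum_{k\ge 0}|b_k|<\infty$.

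To match the DPCP form I would use $G(1)=1$, which gives $b_0+\sum_{k\ge 1}b_k=0$; subtracting this identity yields $G(z)=\exp\bigl(\sum_{k=1}^\infty b_k(z^k-1)\bigr)$, exactly the shape of (\ref{eq:dpcp}) with $\lambda=1$ and $\alpha_k=b_k$, for which $\sum|\lambda\alpha_k|<\infty$ is already secured. The main obstacle is the careful justification that a single-valued analytic branch of $\log G$ exists on a neighborhood of the closed unit disk — that is, that the negative Fourier modes produced by Lévy–Wiener really do vanish; this is where the no-zeros hypothesis enters essentially via the argument-principle/winding-number argument. The rest is algebraic rearrangement and bookkeeping with the absolute convergence provided by Lévy–Wiener.
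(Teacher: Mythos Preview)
The paper does not supply its own proof of this lemma; it is quoted from \cite{zhang14} with the remark that it is ``a direct result from L{\'e}vy--Wiener theorem.'' Your argument follows precisely that route: the forward direction is immediate from the exponential form, and for the converse you apply L{\'e}vy--Wiener to $\log G$ on the unit circle, use the zero winding number to kill the negative Fourier modes, and read off the DPCP exponent. This is the intended approach.

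One technical point deserves tightening. You assert that $G$ is analytic and non-vanishing on a \emph{neighborhood} of $\{|z|\le 1\}$, but the radius of convergence of a p.g.f.\ may be exactly $1$ (e.g.\ $P(X=n)\sim c/n^2$), so in general $G$ is only analytic on the open disk and continuous on its closure. The fix is routine: since the closed disk is simply connected and $G$ is continuous and nonvanishing on it, a single-valued continuous branch of $\log G$ exists on $\{|z|\le 1\}$ and is analytic on $\{|z|<1\}$; equivalently, the argument principle applied on circles $|z|=r$ with $r\uparrow 1$ gives winding number zero. Membership of $\log G$ in the disk algebra then forces the negative Fourier coefficients produced by L{\'e}vy--Wiener to vanish, and the rest of your bookkeeping goes through unchanged.
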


For $X \sim {\rm{CMP}}(\lambda ,\nu )$, is there a zero point of $\frac{Z(\lambda z ,\nu)}{Z(\lambda ,\nu)}$ when $\nu  \in (1,\infty )$? It is not sure. But we can add some other conditions to guarantee that $Z(\lambda z,\nu )$ has no zeros.

\begin{theorem}
The COM-Poisson r.v. is DPCP distributed for $\lambda \le 1$.
\end{theorem}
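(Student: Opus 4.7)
The plan is to verify the hypothesis of \textbf{Lemma \ref{lem:cd}}, namely that the probability generating function of $X$ has no zeros. Writing the CMP p.g.f. as
$$
G(z) = \sum_{k \ge 0}\frac{(\lambda z)^k}{(k!)^\nu\, Z(\lambda,\nu)} = \frac{Z(\lambda z, \nu)}{Z(\lambda, \nu)},
$$
and observing that $Z(\lambda,\nu) > 0$, it suffices to prove $Z(\lambda z, \nu) \ne 0$ for every $|z| \le 1$. Setting $w = \lambda z$, the hypothesis $\lambda \le 1$ forces $|w| \le 1$, so the task reduces to showing $Z(w, \nu) \ne 0$ on the closed unit disk.

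The key identity I would use is obtained by multiplying by $1 - w$ and telescoping. With $a_i := 1/(i!)^\nu$, note that $a_0 = a_1 = 1$ and $a_1 > a_2 > a_3 > \cdots \to 0$, so $c_i := a_{i-1} - a_i$ satisfies $c_1 = 0$, $c_i > 0$ for $i \ge 2$, and $\sum_{i \ge 2} c_i = a_1 = 1$. A direct Cauchy product then gives
$$
(1 - w)\, Z(w, \nu) = 1 - \sum_{i \ge 2} c_i\, w^i.
$$
Applying the triangle inequality on $|w| \le 1$ yields $\bigl|(1-w) Z(w,\nu) - 1\bigr| \le \sum_{i \ge 2} c_i |w|^i \le 1$, so $(1-w)Z(w,\nu)$ lies in the closed disk of radius $1$ about $1$. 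Since $0$ lies on the boundary of this disk, any zero of $(1-w)Z(w,\nu)$ would have to saturate both of these inequalities.

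The main obstacle is ruling out this boundary saturation when $|w| = 1$. I would handle it via the equality case of the triangle inequality: equality in $\bigl|\sum_{i \ge 2} c_i w^i\bigr| \le \sum_{i \ge 2} c_i |w|^i$ forces the complex numbers $c_i w^i$ (all nonzero since $c_i > 0$) to share a common argument, so the $w^i$ themselves do; comparing $i = 2$ and $i = 3$ shows $w = w^3/w^2$ is positive real, giving $w = 1$ on the unit circle. At $w = 1$ the factor $(1 - w)$ already vanishes, while $Z(1, \nu) = \sum_{i \ge 0} 1/(i!)^\nu > 0$ is manifestly nonzero. For $|w| < 1$ the second inequality $\sum_{i \ge 2} c_i |w|^i \le \sum_{i \ge 2} c_i = 1$ is strict, ruling out zeros there as well. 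Combined, $Z(w, \nu) \ne 0$ throughout $|w| \le 1$, and \textbf{Lemma \ref{lem:cd}} delivers the DPCP conclusion.
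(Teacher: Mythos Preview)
Your proposal is correct and follows essentially the same route as the paper: multiply the generating series by $1-w$ (the paper uses $1-z$ with the $P_k$ directly), telescope to obtain a constant term minus a series with nonnegative coefficients summing to that constant, and invoke the triangle inequality. Your treatment of the boundary $|w|=1$ via the equality case of the triangle inequality is in fact more complete than the paper's, which only checks the real endpoints $z=\pm 1$.
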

\begin{proof}
First, we need to show that $G(z)=\frac{Z(\lambda z ,\nu)}{Z(\lambda ,\nu)}$ has no zeros in $0 < \left| z \right| < 1$. Since ${P_k} = \frac{{{\lambda ^k}}}{{{{(k!)}^\nu }}} \cdot \frac{1}{{Z(\lambda ,\nu )}},(k = 0,1,2, \dots )$, we have
\[\left| {(1 - z)G(z)} \right| = \left| {{P_0} - ({P_0} - {P_1})z - ({P_1} - {P_2}){z^2} - \cdots  } \right| \ge {P_0} - \left| {({P_0} - {P_1})\left| z \right| + ({P_1} - {P_2})\left| {{z^2}} \right| +  \cdots } \right|,\]
If ${P_k} - {P_{k + 1}} \ge 0$ for $k =0, 1,2, \dots $, then we have
\[\frac{{{P_{k + 1}}}}{{{P_k}}} = \frac{\lambda }{{{{(k + 1)}^\nu }}} \le 1  \Rightarrow \lambda \le 1    .\]
Hence
\[{P_0} - \left| {({P_0} - {P_1})\left| z \right| + ({P_1} - {P_2})\left| {{z^2}} \right| +  \dots } \right| > {P_0} - ({P_0} - {P_1}) - ({P_1} - {P_2}) +  \dots  = 0.\]
In addition, $z = \pm 1$ is not a zero point since $G(1) = 1,G( - 1) = {P_0} - {P_1} + {P_2} - {P_3} +  \cdots  > 0$.
\end{proof}

For $\lambda \le 1$, rewrite the p.g.f. of COM-Poisson distribution as the expression (\ref{eq:dpcp}). Use the recurrence relation (L{\'e}vy-Adelson-Panjer recursion) of p.m.f. of DPCP distribution, see \cite{buchmann03} and Remark 1 in \cite{zhang14}, we have
$$
  {P_{n + 1}} = \frac{{\tilde \lambda (\lambda ,\nu )}}{{n + 1}}[{\alpha _1}(\lambda ,\nu ){P_n} + 2{\alpha _2}(\lambda ,\nu ){P_{n - 1}} +  \cdots  + (n + 1){\alpha _{n + 1}}(\lambda ,\nu ){P_0}],({P_0} = {e^{ - \tilde \lambda (\lambda ,\nu )}},n = 0,1, \cdots ).
$$
where
\[\frac{{Z(\lambda z,\nu )}}{{Z(\lambda ,\nu )}} = \exp \left( {\sum\limits_{k = 1}^\infty  {\tilde \lambda (\lambda ,\nu ){\alpha _k}(k,\lambda )({z^k} - 1)} } \right) \buildrel \Delta \over = \sum\limits_{i = 0}^\infty  {{P_i}{z^i}}. \]

In this case, the recurrence relation of COM-Poisson distribution has the alternative form with infinite terms. Notice that ${P_k} = P(X = k) = \frac{{{\lambda ^k}}}{{{{(k!)}^\nu }}} \cdot \frac{1}{{Z(\lambda ,\nu )}}$, then the parameters of DPCP distribution of COM-Poisson distribution are determined by the following system of equations:
\[\frac{{{\lambda ^{n + 1}}}}{{{{((n + 1)!)}^\nu }}} = \frac{{\tilde \lambda (\lambda ,\nu )}}{{n + 1}}[{\alpha _1}(\lambda ,\nu )\frac{{{\lambda ^n}}}{{{{(n!)}^\nu }}} + 2{\alpha _2}(\lambda ,\nu )\frac{{{\lambda ^{n - 1}}}}{{{{((n - 1)!)}^\nu }}} +  \cdots  + (n + 1){\alpha _{n + 1}}(\lambda ,\nu )],(n = 0,1, \cdots ),\]
where ${\tilde \lambda (\lambda ,\nu )} = \ln {P_0}$.

A limit case as $\nu  \to  + \infty $, $Z(\lambda ,\nu ) \to 1 + \lambda $, hence COM-Poisson distribution tends to a Bernoulli distribution with $P(X = 0) = \frac{1}{{1 + \lambda }}$. The p.g.f. of this Bernoulli distribution $\frac{{z + \lambda z}}{{1 + \lambda }}$ has zero point if $\lambda  \ge 1$.

\subsection{Functional operator characterization}
The recurrence relation (\ref{eq:rec}) helps us consider the following functional operator characterization, which is well-known in
the Stein-Chen method literature, or from the work of \cite{brown01} who studied the functional operator characterizations (we call it Stein identity) a very large class of the equilibrium distribution of a birth-death process. Here we want
to give an alternative proof, see Appendix for the proof.

\begin{lemma}
  Let $g: \Bbb{N} \to \Bbb{R}$ be a bounded function, suppose there
  exists a bounded solution $f:{\Bbb N} \to {\Bbb R}$ satisfies
$$
{w^\nu }f(w) - \lambda f(w + 1) = g(w),\quad (\nu  > 0, \lambda  > 0).
$$
Then $W$ is COM-Poisson distributed if and only if $\E g(W)=0$ provided that $\E {W^\nu }f(W)<\infty $.
\end{lemma}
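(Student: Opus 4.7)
My plan is to prove the two directions separately, with the harder direction reduced to the recurrence-relation characterization already established in Lemma \ref{lem:rec}.

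For the forward direction ($\Rightarrow$), I would assume $W \sim \mathrm{CMP}(\lambda,\nu)$ and compute $\mathbb{E}[W^\nu f(W)]$ directly from the p.m.f.\ \eqref{eq:com}. The $w=0$ term vanishes thanks to the factor $w^\nu$ (with $\nu>0$), so
\[
\mathbb{E}[W^\nu f(W)] = \sum_{w=1}^\infty f(w)\,\frac{w^\nu \lambda^w}{(w!)^\nu Z(\lambda,\nu)} = \sum_{w=1}^\infty f(w)\,\frac{\lambda^w}{((w-1)!)^\nu Z(\lambda,\nu)}.
\]
A shift of index $w\mapsto w+1$ recasts this sum as $\lambda \sum_{w=0}^\infty f(w+1)\,\pi_w = \lambda\,\mathbb{E}[f(W+1)]$, where $\pi_w$ is the CMP p.m.f. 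Subtracting yields $\mathbb{E}g(W)=\mathbb{E}[W^\nu f(W)-\lambda f(W+1)]=0$, as required; the hypothesis $\mathbb{E}W^\nu f(W)<\infty$ justifies the rearrangement.

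For the reverse direction ($\Leftarrow$), I would interpret the statement as: if $\mathbb{E}g(W)=0$ holds for every pair $(g,f)$ satisfying the Stein equation with $f$ bounded, then $W \sim \mathrm{CMP}(\lambda,\nu)$. The strategy is to feed a family of simple bounded test functions into the identity. For each $k\in\mathbb{N}$, choose $f_k(w)=\mathbf{1}_{\{w=k+1\}}$, which is obviously bounded and gives
\[
g_k(w) = (k+1)^\nu\,\mathbf{1}_{\{w=k+1\}} - \lambda\,\mathbf{1}_{\{w=k\}},
\]
also bounded, with $\mathbb{E}W^\nu f_k(W)=(k+1)^\nu P(W=k+1)<\infty$. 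The hypothesis $\mathbb{E}g_k(W)=0$ therefore reads
\[
(k+1)^\nu P(W=k+1) = \lambda\,P(W=k), \qquad k=0,1,2,\dots,
\]
which is precisely the recurrence \eqref{eq:rec}; by Lemma \ref{lem:rec}, $W\sim\mathrm{CMP}(\lambda,\nu)$.

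The one delicate point is to ensure that the ratio $P(W=k+1)/P(W=k)$ appearing in Lemma \ref{lem:rec} is well defined, i.e.\ that $P(W=k)>0$ for all $k$. This is not an obstacle but deserves a short remark: if $k_0$ were the smallest index with $P(W=k_0)=0$, then for $k_0\ge 1$ the same identity applied at index $k_0-1$ would force $\lambda P(W=k_0-1)=k_0^\nu P(W=k_0)=0$, contradicting the minimality of $k_0$; while $k_0=0$ propagates forward to $P(W=k)=0$ for all $k$, contradicting the fact that $W$ is a random variable. Hence $P(W=k)>0$ for every $k$, and the recurrence applies. The main substantive step is therefore just the index shift in the forward direction together with the clever choice of indicator test functions in the reverse direction.
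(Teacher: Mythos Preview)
Your proof is correct and follows essentially the same route as the paper: both directions are handled exactly as in the Appendix proof---an index shift establishes $\lambda\,\E f(W+1)=\E W^\nu f(W)$ for the sufficiency, and indicator test functions $f=\mathbf{1}_{\{\cdot=j\}}$ together with Lemma~\ref{lem:rec} give the necessity. Your additional remark verifying $P(W=k)>0$ for all $k$ is a nice bit of extra care that the paper leaves implicit.
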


By using the Stein identity lemma above and the Stein-Chen method, \cite{daly16} have shown some convergence results and approximations, including a bound on the total variation distance between a COM-binomial r.v. and the corresponding COM-Poisson r.v..

\section{Conclusion}
The results in this works contain some theoretical properties of the COM-Poison distribution, with potentially researches in the probability theory. For example, the COM-type distribution as a ``$\nu$-power distribution" could be extended to continuous distributions, and more probability properties could be explored. In statistics, goodness-of-fit tests of COM-Poisson distribution based on the new characterizations (Stein identity, conditional distribution) shed light on more powerful tests than some omnibus goodness-fit tests like Kolmogorov-Smirnov or Chi-squared test.

\section{Acknowledgments}
This work was partially supported by the National Natural Science Foundation of China (No.11201165). The authors thank Arash Sioofy Khoojine for careful reading and insightful comments, which
markedly improve the quality of this paper.

\section{Appendix}

\subsection{Queuing systems characterization}
The COM-Poisson distribution can be generated as a queueing system with Poisson arrival rate $\lambda_{0}$ and exponential service rate $\mu_{n}=\mu n^{\nu}$ (which depends on the size of the queue $n$). Let $P_{n}(t)$ be the size of $n$ in the queue at time $t$ , and it also satisfies the system of differential difference equations (due to \cite{conway62}):

\begin{numcases}{}
P_{0}(t+\Delta t)=(1-\lambda_{0} \Delta t)P_{0}(t)+ \mu \cdot 1^{\nu} \Delta t P_{1}(t)\label{eq:one1}\\
P_{0}(t+\Delta t)=\lambda_{0} \Delta t P_{n-1}(t)+ (1-\lambda_{0} \Delta t - \mu n^{\nu} \Delta t) P_{n}(t) + \mu (n+1)^{\nu} \Delta t P_{n+1}(t), (n=1,2,...)\label{eq:one2}
\end{numcases}

From (\ref{eq:one1}) , we have $P_{0}^{'}(t)=-\lambda_{0} P_{0}(t)+\mu P_{1}(t)$.

Suppose that the queue reaches a steady state as $t\rightarrow \infty$, then $ \lim_{t\rightarrow \infty} P_{n}^{'}(t)=0$.
Set $P_{n}=\lim _{t\rightarrow \infty } P_{n}(t)$, we obtain $P_{1}=\frac{\lambda_{0}}{\mu} P_{0}$.

Using the approach above again, (\ref{eq:one2}) becomes
\begin{equation} \label{eq:three}
  0= -(\lambda_{0} +\mu n^{\nu}) P_{n} + \lambda_{0} P_{n-1} + \mu (n+1)^{\nu} P_{n+1}(t), (n=1,2,...)
\end{equation}
which implies $P_{n+1}= (\frac{\lambda_{0}}{\mu}+n^{\nu} ) \frac{P_{n}}{(n+1)^{\nu}} - \frac{\lambda_{0}}{\mu}  \frac{P_{n-1}}{(n+1)^{\nu}}   $.

Let $n=1$, we have
$$
  P_{2}(t)=(\frac{\lambda_{0}}{\mu}+1) \frac{\lambda_{0}}{\mu} \frac{P_{0}}{2^{\nu}}-\frac{\lambda_{0}}{\mu} \frac{P_{0}}{2^{\nu}}=(\frac{\lambda_{0}}{\mu})^{2} \frac{P_0}{2^{\nu}}.
$$

Let $n=2$, we have
$$
  P_{3}(t)=(\frac{\lambda_{0}}{\mu}+ 2^{\nu})  \frac{P_{2}}{3^{\nu}}-\frac{\lambda_{0}}{\mu} \frac{P_{1}}{3^{\nu}}=(\frac{\lambda_{0}}{\mu})^{3} \frac{P_0}{3^{\nu}}.
$$

In general, applying (\ref{eq:three}) ,we can get the follow formula by induction:
$$
  P_{n}=(\frac{\lambda_{0}}{\mu})^{n} \frac{P_{0}}{(n!)^{\nu}} .
$$

\subsection{Proof of Lemma 4.3}

\begin{proof}
  Sufficiency: If the r.v. $W \sim {\rm{CMP}}(\lambda ,\nu )$, then we have
 \begin{align*}
    \lambda \E [f(W+1)] = \lambda \sum\limits_{\omega=0}^{\infty}[f(\omega+1)P(W=\omega)]
    &=\sum\limits_{\omega=0}^{\infty}\frac{\lambda^{\omega+1}}{(\omega !)^{\nu}}\frac{1}{Z(\lambda,\nu)}f(\omega+1)\\
    &=\sum\limits_{\omega +1=1}^{\infty}\frac{\lambda^{\omega+1}}{((\omega+1)!)^{\nu}}\frac{1}{Z(\lambda,\nu)}(\omega+1)^{\nu}f(\omega+1)\\
    &=\E [W^{\nu}f(W)].
\end{align*}
Hence $\E g(W) = 0$.

Necessity: If $\E g(W) = 0$ holds for all bounded functions. Without loss of
generality, for $j\in \Bbb{N}$, we could take $f(\omega) = 1[\omega =j]$ where $1[\omega =j]$ is the indicator function, so that the $\E g(W) = 0$  implies that
$$
  \lambda \E [f(W+1)] =\E [W^{\nu}f(W)].
$$
Hence,
$$
  \lambda P(W=x-1)= x^{\nu}P(W=x).
$$
So $W$ has the COM-Poisson distribution from Lemma \ref{lem:rec}, i.e., the recurrence relation characterization.
\end{proof}

\section{References}


\begin{thebibliography}{11}
\providecommand{\natexlab}[1]{#1} \providecommand{\url}[1]{\texttt{#1}} \providecommand{\urlprefix}{URL }
\expandafter\ifx\csname urlstyle\endcsname\relax
  \providecommand{\doi}[1]{doi:\discretionary{}{}{}#1}\else
  \providecommand{\doi}{doi:\discretionary{}{}{}\begingroup
  \urlstyle{rm}\Url}\fi
\providecommand{\eprint}[2][]{\url{#2}}

\bibitem[{Ahmad(2007)}]{ahmad07}
Ahmad, M. (2007). A short note on Conway-Maxwell-hyper Poisson distribution. Pakistan Journal of Statistics, 23(2), 135-137.

\bibitem[{Borges et al.(2014)}]{borges14}
Borges, P., Rodrigues, J., Balakrishnan, N., Baz¨¢n, J. (2014). A COM-Poisson type generalization of the binomial distribution and its properties and applications. Statistics \& Probability Letters, 87, 158-166.

\bibitem[{Brown and Xia(2001)}]{brown01}
Brown, T. C., Xia, A. (2001). Stein's method and birth-death processes. The Annals of Probability, 29(3), 1373-1403.

\bibitem[{Buchmann(2003)}]{buchmann03}
Buchmann, B., Gr{\"u}bel, R. (2003). Decompounding: an estimation problem for Poisson random sums. The Annals of Statistics, 31(4), 1054--1074.

\bibitem[{Chatterji(1963)}]{chatterji63}
Chatterji, S. D. (1963). Some elementary characterizations of the Poisson distribution. American Mathematical Monthly, 70(9), 958-964.

\bibitem[{Chakraborty and Imoto(2016)}]{chakraborty15}
Chakraborty, S., Imoto, T. (2016). Extended Conway-Maxwell-Poisson distribution and its properties and applications. Journal of Statistical Distributions and Applications, 3(1), 5.

\bibitem[{Conway and Maxwell(1962)}]{conway62}
Conway, R. W., Maxwell, W. L. (1962). A queuing model with state dependent service rates. Journal of Industrial Engineering, 12(2), 132-136.

\bibitem[{Daly and Gaunt(2016)}]{daly16}
Daly, F., Gaunt, R. E. (2016). The Conway-Maxwell-Poisson distribution: distributional theory and approximation. ALEA: Latin American Journal of Probability and Mathematical Statistics,13, 635-658.

\bibitem[{Gillispie and Green(2015)}]{gillispie15}
Gillispie, S. B., Green, C. G. (2015). Approximating the Conway-Maxwell-Poisson distribution normalization constant. Statistics, 49(5), 1062-1073.

\bibitem[{G{\'o}mez-D{\'e}niz and Calder{\'i}n-Ojeda(2014)}]{gomez14}
G{\'o}mez-D{\'e}niz, E., Calder{\'i}n-Ojeda, E. (2014). Unconditional distributions obtained from conditional specification models with applications in risk theory. Scandinavian Actuarial Journal, 2014(7), 602-619.

\bibitem[{Haight(1967)}]{haight67}
Haight, F. A. (1967). Handbook of the Poisson distribution, Wiley, Los Angeles.

\bibitem[{Imoto(2014)}]{imoto14}
Imoto, T. (2014). A generalized Conway-Maxwell-Poisson distribution which includes the negative binomial distribution. Applied Mathematics and Computation, 247, 824-834.

\bibitem[{J{\'a}nossy et al.(1950)}]{janossy50}
J{\'a}nossy, L., R{\'e}nyi, A., Acz{\'e}l, J. (1950). On composed Poisson distributions, I. Acta Mathematica Hungarica, 1(2), 209--224.

\bibitem[{Johnson et al.(2005)}]{johnson05}
Johnson, N. L., Kemp, A. W., Kotz S. (2005). Univariate Discrete Distributions, 3ed. Wiley, New Jersey.

\bibitem[{Kagan(2001)}]{kagan01}
Kagan, A. (2001). A discrete version of the Stam inequality and a characterization of the Poisson distribution. Journal of statistical planning and inference, 92(1), 7-12.

\bibitem[{Kagan et al.(1973)}]{kagan73}
Kagan, A. M., Rao, C. R., Linnik, Y. V. (1973). Characterization problems in mathematical statistics, Wiley.

\bibitem[{Kokonendji et al.(2008)}]{kokonendji08}
Kokonendji, C. C., Mizere, D., Balakrishnan, N. (2008). Connections of the Poisson weight function to overdispersion and underdispersion. Journal of Statistical Planning and Inference, 138(5), 1287-1296.

\bibitem[{Lin and Hu(2008)}]{lin08}
Lin, G. D., Hu, C. Y. (2001). The Riemann zeta distribution. Bernoulli, 7(5), 817-828.

\bibitem[{Moran(1952)}]{moran52}
Moran, P. A. P. (1952). A characteristic property of the Poisson distribution. In Mathematical Proceedings of the Cambridge Philosophical Society (Vol. 48, No. 01, pp. 206-207). Cambridge University Press.

\bibitem[{Nadarajah(2009)}]{nadarajah09}
Nadarajah, S. (2009). Useful moment and CDF formulations for the COM-Poisson distribution. Statistical Papers, 50(3), 617-622.

\bibitem[{Patil et al.(1975)}]{patil75}
Patil, P.~(Editor), Kotz,S.~(Editor), Ord, J.K.~(Editor). (1975). A Modern Course on Statistical Distributions in Scientific Work: Volume 3 - Characterizations and Applications. Springer.

\bibitem[{Patil and Seshadri(1964)}]{patil64}
Patil, G. P., Seshadri, V. (1964). Characterization theorems for some univariate probability distributions. Journal of the Royal Statistical Society. Series B (Methodological), 286-292.

\bibitem[{Puig and Valero(2006)}]{puig06}
Puig, P.,  Valero, J. (2006). Count data distributions: some characterizations with applications. Journal of the American Statistical Association, 101(473), 332-340.

\bibitem[{Pog{\'a}ny(2016)}]{pogany16}
Pog{\'a}ny, T. K. (2016). Integral form of the COM-Poisson renormalization constant. Statistics \& Probability Letters, 119, 144-145.

\bibitem[{Rao and Rubin(1964)}]{rao64}
Rao, C. R., Rubin, H. (1964). On a characterization of the Poisson distribution. Sankhy{\=a}: The Indian Journal of Statistics, Series A, 295-298.

\bibitem[{R{\'e}nyi(1961)}]{renyi61}
R{\'e}nyi, A. (1961). On measures of entropy and information. Proceedings of the Fourth Berkeley symposium on mathematical statistics and probability (Vol. 1, pp. 547-561).

\bibitem[{Shanbhag(1977)}]{shanbhag77}
Shanbhag, D. N. (1977). An extension of the Rao-Rubin characterization of the Poisson distribution. Journal of Applied Probability, 14(3), 640-646.

\bibitem[{Shmueli et al.(2005)}]{shmueli05}
Shmueli, G., Minka, T. P., Kadane, J. B., Borle, S., Boatwright, P. (2005). A useful distribution for fitting discrete data: revival of the Conway-Maxwell-Poisson distribution. Journal of the Royal Statistical Society: Series C (Applied Statistics), 54(1), 127-142.

\bibitem[{Stam(1959)}]{stam59}
Stam, A. J. (1959). Some inequalities satisfied by the quantities of information of Fisher and Shannon. Information and Control, 2(2), 101-112.

\bibitem[{Tsallis(1988)}]{tsallis88}
Tsallis, C. (1988). Possible generalization of Boltzmann-Gibbs statistics. Journal of statistical physics, 52(1-2), 479-487.

\bibitem[{Wang(1975)}]{wang75}
Wang, P. C. (1975). Characterizations of the Poisson distribution based on random splitting and random expanding. Discrete Mathematics, 13(1), 85-93.

\bibitem[{Zhang et al.(2014)}]{zhang14}
Zhang, H., Liu, Y., Li, B. (2014). Notes on discrete compound Poisson model with applications to risk theory. Insurance: Mathematics and Economics, 59, 325-336.

\bibitem[{Zhang and Li(2016)}]{zhang16}
Zhang, H., Li, B. (2016). Characterizations of discrete compound Poisson distributions. Communications in Statistics-Theory and Methods, 45(22), 6789-6802.

\bibitem[{Zhang et al.(2017)}]{zhang17}
Zhang, H., Li, B., \& Kerns, G. J. (2017). A characterization of signed discrete infinitely divisible distributions. Studia Scientiarum Mathematicarum Hungarica, 54(4), 446-470.

\bibitem[{Zhang et al.(2018)}]{zhang18}
Zhang, H., Tan, K., \& Li, B. (2018). COM-negative binomial distribution: modeling overdispersion and ultrahigh zero-inflated count data. Frontiers of Mathematics in China, 13(4), 967-998.

\end{thebibliography}
\end{document}